\newtheorem{theorem}{Theorem}
\newtheorem{proposition}[theorem]{Proposition}
\newtheorem{lemma}[theorem]{Lemma}
\newtheorem{define}[theorem]{Definition}
\newtheorem{remark}[theorem]{Remark}
\newtheorem{problem}[theorem]{Problem}
\newcommand{\mendth}{\hfill \ensuremath{\vartriangle}}
\newcommand{\mendprop}{\hfill \ensuremath{\triangledown}}
\DeclareMathOperator*{\esssup}{ess\ sup}
\newcommand{\pushright}[1]{\ifmeasuring@#1\else\hfill \displaystyle#1\fi\ignorespaces}
\newcommand{\pushleft}[1]{\ifmeasuring@#1\else\omit$\displaystyle#1$\hfill\fi\ignorespaces}
\newenvironment{proof}{{\it Proof :~}}{\hfill$\diamondsuit$\\}
\begin{document}


\begin{frontmatter}

\title{Interval peak-to-peak observers for continuous- and discrete-time systems with persistent inputs and delays}

\author{Corentin Briat} \ead{corentin.briat@bsse.ethz.ch, corentin@briat.info}\ead[url]{http://www.briat.info} and
\author{Mustafa Khammash}\ead{mustafa.khammash@bsse.ethz.ch}\ead[url]{https://www.bsse.ethz.ch/ctsb}

\address{Department of Biosystems Science and Engineering, ETH--Z\"{u}rich, Switzerland.}

\begin{keyword}
Interval observers, positive systems; peak-to-peak gain; delay systems
\end{keyword}

\begin{abstract}
While the design of optimal peak-to-peak controllers/observers for linear systems is known to be a difficult problem, this problem becomes interestingly much easier in the context of interval observers because of the positive nature of the error dynamics. Indeed, by exploiting several recent results on positive systems, we propose a novel and non-conservative approach  formulated in terms of tractable finite-dimensional linear programs for designing a class of interval observers achieving minimum peak-to-peak gain. The optimal observer is notably shown to be uniform over the set of all possible mappings between observation errors and their weighted versions, which parallels a recent result on the stabilization of linear positive systems. Results pertaining on the interval observation of time-delay and discrete-time systems are then obtained as a direct application of the proposed method, emphasizing then its versatility. Several examples on the interval observation of linear and nonlinear systems are finally given for illustration.
\end{abstract}
\end{frontmatter}

\section{Introduction}

Interval observers \cite{Gouze:00, Rapaport:03} are an interesting class of observers that can be used to estimate upper- and lower-bounds on the state value of a given system at any given time. The rationale for using such observers lies in the fact that when the observed system is subject to disturbances or uncertainties, estimation errors are often unavoidable, unless decoupling is possible. When this happens, then it becomes unclear in which region the state currently lies. This problem is elegantly resolved, among others, by the use of interval observers that will estimate an upper- and a lower-bound on the current state-value, thereby explicitly defining an interval where the state lies within. In this respect, interval observers are highly relevant in the context of observation of systems for which only a poor model is available, like in fields such as ecology, epidemiology or biology; see e.g. \cite{Murray:02}. Interval observers have been already considered for systems with inputs \cite{Mazenc:11}, linear systems \cite{AitRami:11b,Mazenc:12,Cacace:15}, uncertain systems \cite{Bolajraf:15}, time-varying systems \cite{Thabet:14}, delay systems \cite{Li:12,Polyakov:13,Efimov:13c}, nonlinear systems \cite{Raissi:10,Efimov:13}, LPV systems \cite{Efimov:13b,Chebotarev:15}, discrete-time systems \cite{Mazenc:13}, for stabilization \cite{Polyakov:13,Efimov:15b}, etc.

The underlying idea is to design an observer in a way that makes the observation errors nonnegative over time, which is equivalent to having error dynamics represented by a positive system \cite{Farina:00}. This positivity property is very important to consider because of the interesting properties of positive systems. It is indeed now well-known that their stability \cite{Haddad:05} and their $L_1$-/$L_\infty$-gain can be established using finite-dimensional linear programs \cite{Briat:11g,Briat:11h}. 

We hence consider here a simple class of interval observers and propose a methodology for designing them so that the $L_\infty$-gain of the operator mapping the persistent input errors (disturbances) to the observed output -- a weighted version of the observation errors -- is minimum; see e.g. \cite{Voulgaris:95,Blanchini:94}. By doing so, we simply attempt to minimize the influence of the disturbances on the width of the interval in a peak-to-peak sense. It is also important to stress that, in this context, considering bounded and persistent disturbances (i.e. in $L_\infty$) seems more relevant than considering vanishing ones, as in the $L_1$ and in the $L_2$ frameworks, as the disturbances may depend on persistent exogenous signals or on the state of the process. The general control/observation/filtering problems are reputed to be difficult in this framework; see e.g. \cite{Vidyasagar:86,Dahleh:87,Dahleh:95,Voulgaris:95,Blanchini:94}. The approach proposed in this paper is different than the one that could have been based on the so-called $*$-norm \cite{Nagpal:94} which is known to be a loose characterization of the peak-to-peak gain as minimizing the $*$-norm does not necessarily result in a small peak-to-peak norm; see e.g.\cite{Venkatesh:95}. The approach is also different from the one considered in the context of set-valued observers \cite{Shamma:99} that aims at estimating  the set of all possible states based on the knowledge of the output of the system and a model of the exogenous signals. These observers are known to be difficult to implement in real-time due to a high computational burden.

The approach considered in this paper is instead based on positive systems theory and leads to both constructive and nonconservative design conditions that take the form of tractable finite-dimensional linear programs, thereby circumventing the inherent difficulties of the design of general optimal peak-to-peak observers. Interestingly, the optimal interval observer is uniform over all the possible values for the matrix parameter that maps the observation errors to the performance outputs. This essentially means that changing the performance specifications (i.e. the performance output) will not require the re-design of the optimal interval-observer. Note, however, that the performance level, i.e. the actual $L_\infty$-gain, is not uniform and will actually vary when the performance output is changed. This property is dual to the one obtained in \cite{Ebihara:12} that states that the optimal state-feedback controller that minimizes the $L_1$-gain of the closed-loop system is uniform over the set of all the possible values for the input matrices driving the disturbance into the system. Finally, it seems important to mention that even though the class of problems we consider in this paper may seem restrictive, it is demonstrated that the proposed approach extends, among others, to more complex observers, to systems with delays and to discrete-time systems in a very natural way.





\textbf{Outline.} Section \ref{sec:prel} introduces the considered norms for signals and systems, as well as tools for computing them whenever the considered systems are positive. The problem of designing $L_\infty$-to-$L_\infty$ interval-observers for continuous-time systems with persistent disturbances is solved in Section \ref{sec:obs}. Examples are discussed in Section \ref{sec:ex}.

\textbf{Notations.} The set of positive and nonnegative real numbers are denoted by $\mathbb{R}_{>0}$ and $\mathbb{R}_{\ge0}$, respectively, and they naturally extend to vectors and matrices as $\mathbb{R}_{\ge0}^{n\times m}$, $\mathbb{R}_{>0}^{n\times m}$ where the inequality signs are entry-wise. The $n$-dimensional vector of ones is denoted by $\mathds{1}_n$. For two vectors $a,b\in\mathbb{R}^n$, $a<(\le)b$ means that $b-a\in\mathbb{R}^n_{>0}$ $(\mathbb{R}^n_{\ge0})$. The sequence $\{e_i\}_{i=1}^n$ is used to represent the natural basis of $\mathbb{R}^n$.

\section{Preliminaries}\label{sec:prel}

\subsection{The $L_\infty$-norm and the $L_\infty$-gain of linear systems}

\begin{define}[\cite{Desoer:75a}]
  The space of essentially bounded functions $w:\mathbb{R}_{\ge0}\to\mathbb{R}^p$ is denoted by $L_\infty(\mathbb{R}_{\ge0},\mathbb{R}^p)$ and the associated norm is defined as
  \begin{equation*}
    ||w||_{L_\infty}:=\esssup_{t\ge0}||w(t)||_\infty
  \end{equation*}
  where $||\cdot||_\infty$ is the standard vector $\infty$-norm. In what follows, we will use the shorthand $L_\infty$ for simplicity.\mendprop
\end{define}

\begin{define}[\cite{Desoer:75a}]
  Let us consider an asymptotically stable LTI system $G$ mapping $p$ inputs to $q$ outputs. Then its $L_\infty$-gain (or $L_1$-norm) is defined as
  \begin{align*}
   & ||G||_{L_\infty-L_\infty}=\sup_{||w||_{L_\infty}=1}||Gw||_{L_\infty}.&\pushright{\triangledown}
  \end{align*}
\end{define}

\subsection{Linear positive systems}

Let us start with fundamental definitions:
\begin{define}
  A matrix $A\in\mathbb{R}^{n\times n}$ is said to be Metzler if all its off-diagonal entries are nonnegative.\mendprop
\end{define}
\begin{proposition}[\cite{Farina:00}]
  Let us consider the following system
    \begin{equation}\label{eq:syst}
      \begin{array}{lcl}
        \dot{x}(t)&=&Ax(t)+Ew(t),\ x(0)=x_0\\
         z(t)&=&C_zx(t)+F_zw(t).
       \end{array}
    \end{equation}
    where $x,x_0\in\mathbb{R}^n$, $w\in\mathbb{R}^p$,  and $z\in\mathbb{R}^q$ are the state of the system, the initial condition, the input and the output. The following statements are equivalent:
    \begin{enumerate}
      \item The system is positive, i.e. for all $x_0\ge0$ and $w(t)\ge0$, we have that $x(t)\ge0$ and $z(t)\ge0$.
      \item The matrix $A$ is Metzler and the matrices $E,C_z$ and $F_z$ are nonnegative matrices.\mendth
    \end{enumerate}
\end{proposition}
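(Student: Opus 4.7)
The plan is to prove the two implications separately, using the variation-of-constants formula together with a characterization of Metzler matrices via the nonnegativity of their matrix exponential.

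For the direction $(b)\Rightarrow(a)$, I would first recall the standard fact that a matrix $A$ is Metzler if and only if $e^{At}\geq 0$ for all $t\geq 0$. One direction is immediate by writing $A=M-\alpha I$ with $M\geq 0$ and $\alpha>0$ large enough, so that $e^{At}=e^{-\alpha t}\sum_{k\geq 0}(Mt)^k/k!\geq 0$. I would then invoke the variation-of-constants formula
\begin{equation*}
x(t)=e^{At}x_0+\int_0^t e^{A(t-s)}Ew(s)\,ds
\end{equation*}
and observe that since $e^{A(t-s)}\geq 0$, $E\geq 0$, $x_0\geq 0$ and $w(s)\geq 0$, the integrand and the initial term are both nonnegative, hence $x(t)\geq 0$. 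Nonnegativity of $z(t)=C_zx(t)+F_zw(t)$ then follows immediately from $C_z,F_z\geq 0$.

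For the converse $(a)\Rightarrow(b)$, I would extract the four sign conditions one at a time by testing the system with suitably chosen initial conditions and inputs. Taking $w\equiv 0$ and $x_0=e_i$, the solution is $x(t)=e^{At}e_i$, which must be nonnegative for every $t\geq 0$ and every $i$; this yields $e^{At}\geq 0$ and, via the expansion $e^{At}=I+At+o(t)$ evaluated entrywise at an off-diagonal position, gives $A_{ji}\geq 0$ for $j\neq i$, i.e.\ $A$ is Metzler. Still with $w\equiv 0$ and $x_0=e_i$, the output at $t=0$ reads $z(0)=C_ze_i\geq 0$, so $C_z\geq 0$. Finally, taking $x_0=0$ and a constant input $w(t)\equiv e_j$, the positivity of $z(0)=F_ze_j$ gives $F_z\geq 0$, while differentiating $x$ at $t=0$ (where $x(0)=0$ and $x(t)\geq 0$ forces $\dot x(0)\geq 0$) yields $Ee_j=\dot x(0)\geq 0$ and hence $E\geq 0$.

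The only genuinely nontrivial step is the equivalence between $A$ being Metzler and $e^{At}$ being entrywise nonnegative; everything else is a direct substitution of particular initial conditions and inputs. I would therefore either cite this classical result or prove it inline by the shifting argument sketched above, after which both implications reduce to linear algebra on the variation-of-constants formula.
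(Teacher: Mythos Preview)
Your proof is correct and follows the standard argument for this classical characterization of positive linear systems. Note, however, that the paper does not actually prove this proposition: it is stated as a known result with a citation to \cite{Farina:00}, so there is no ``paper's own proof'' to compare against. Your argument via the variation-of-constants formula and the Metzler/nonnegative-exponential equivalence is exactly the textbook route one finds in that reference.
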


We now state two fundamental stability results regarding the stability of linear positive systems:
\begin{theorem}[\cite{Farina:00}]\label{th:stab}
  Let us consider a Metzler matrix $A\in\mathbb{R}^{n\times n}$. Then, the following statements are equivalent:
  \begin{enumerate}
    \item $A$ is Hurwitz stable.
    \item There exists a $\lambda\in\mathbb{R}^n_{>0}$ such that $\lambda^TA<0$.
    \item There exists a $\mu\in\mathbb{R}^n_{>0}$ such that $A\mu<0$.\mendth
  \end{enumerate}
\end{theorem}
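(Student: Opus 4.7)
The plan is to establish the cycle (3) $\Rightarrow$ (1) $\Rightarrow$ (3) and separately note that (1) $\Leftrightarrow$ (2) follows from (1) $\Leftrightarrow$ (3) applied to $A^\T$, which is Metzler iff $A$ is and has the same spectrum. So the real work is a pair of implications.

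For (3) $\Rightarrow$ (1), my approach is a diagonal scaling plus Gershgorin argument. Given $\mu\in\mathbb{R}^n_{>0}$ with $A\mu<0$, set $D=\diag(\mu)$ and consider $\tilde A:=D^{-1}AD$. This similarity preserves eigenvalues, preserves the Metzler property (the off-diagonal entries get rescaled by positive numbers), and the $i$-th row sum of $\tilde A$ equals the $i$-th entry of $D^{-1}A\mu$, which is strictly negative by assumption. Since $\tilde A$ is Metzler, each Gershgorin disk $\{z:|z-\tilde a_{ii}|\le \sum_{j\ne i}\tilde a_{ij}\}$ has its rightmost point equal to the (negative) row sum $\sum_j \tilde a_{ij}$, so every eigenvalue of $\tilde A$ (hence of $A$) has negative real part.

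For (1) $\Rightarrow$ (3), I would use the nonsingular M-matrix structure of $-A$. Picking $\alpha>0$ large enough that $B:=\alpha I+A\ge0$, the spectrum shift gives $\rho(B)<\alpha$ precisely because $A$ is Hurwitz (the Perron--Frobenius eigenvalue of the nonnegative matrix $B$ is the largest real part in its spectrum, and this corresponds, after unshifting, to $\max\mathrm{Re}\,\lambda(A)<0$). Then the Neumann series
\begin{equation*}
(-A)^{-1}=(\alpha I-B)^{-1}=\alpha^{-1}\sum_{k\ge0}(B/\alpha)^k
\end{equation*}
is entrywise nonnegative. Defining $\mu:=-A^{-1}\mathds{1}_n$ yields $\mu\ge0$ with $A\mu=-\mathds{1}_n<0$. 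To upgrade $\mu\ge0$ to $\mu>0$, I would argue by contradiction using the Metzler structure: if $\mu_i=0$ for some index $i$, then $(A\mu)_i=\sum_{j\ne i}A_{ij}\mu_j\ge0$ because off-diagonal entries of $A$ and all entries of $\mu$ are nonnegative, contradicting $(A\mu)_i=-1$.

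The main obstacle is the spectral ingredient needed in (1) $\Rightarrow$ (3), namely showing that a Hurwitz Metzler $A$ satisfies $(-A)^{-1}\ge0$; everything else is bookkeeping. This relies on the Perron--Frobenius property that the dominant eigenvalue of the shifted nonnegative matrix $\alpha I+A$ is a real eigenvalue with the largest modulus among the spectrum, which is what licenses the Neumann expansion and hence the whole construction of $\mu$.
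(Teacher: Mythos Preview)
Your proof is correct. The paper does not actually prove this theorem: it is stated as a classical fact with a citation to Farina and Rinaldi, \emph{Positive Linear Systems: Theory and Applications}, so there is no in-paper argument to compare against.

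For the record, your two implications are sound. The diagonal scaling $D=\diag(\mu)$ followed by Gershgorin is the clean way to get (c) $\Rightarrow$ (a), and the rightmost point of each disk is exactly the (negative) row sum because the off-diagonal entries of a Metzler matrix are nonnegative. For (a) $\Rightarrow$ (c), your shift $B=\alpha I+A\ge0$ together with Perron--Frobenius gives $\rho(B)$ as a real eigenvalue, hence $\rho(B)=\alpha+\max\mathrm{Re}\,\lambda(A)<\alpha$, which legitimizes the Neumann series and yields $(-A)^{-1}\ge0$; the contradiction argument showing $\mu=(-A)^{-1}\mathds{1}_n$ is strictly positive is also correct. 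The duality observation $A\leftrightarrow A^\T$ handles (a) $\Leftrightarrow$ (b) with no extra work. This is essentially the standard M-matrix proof one finds in the cited reference.
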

\begin{theorem}[\cite{Briat:11g,Briat:11h}]\label{th:Linf}
  Let $A\in\mathbb{R}^{n\times n}$ be a Metzler matrix and $E\in\mathbb{R}^{n\times p}_{\ge0}$, $C\in\mathbb{R}^{q\times n}_{\ge0}$ and $F\in\mathbb{R}^{q\times p}_{\ge0}$ be nonnegative matrices. Let $\gamma>0$, then the following statements are equivalent:
  \begin{enumerate}
    \item The linear system \eqref{eq:syst} is asymptotically stable and $||w\mapsto z||_{L_\infty-L_\infty}<\gamma$.
    %
    %
    \item $A$ is Hurwitz stable and $(-C_zA^{-1}E+F_z)\mathds{1}_p<\gamma \mathds{1}_q$.
    \item There exists a $\lambda\in\mathbb{R}_{>0}^n$ such that the conditions
    \begin{equation}\label{eq:Linfcond}
      \begin{array}{rcl}
        A\lambda+E\mathds{1}_p&<&0\\
        C_z\lambda+F_z\mathds{1}_p-\gamma\mathds{1}_q&<&0
      \end{array}
    \end{equation}
    hold.\mendth
  \end{enumerate}
\end{theorem}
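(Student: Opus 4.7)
My plan is to prove the three-way equivalence via the cycle $(c) \Rightarrow (a) \Rightarrow (b) \Rightarrow (c)$, leveraging the positivity of the system so that the worst-case $L_\infty$ input is essentially the constant $\mathds{1}_p$.

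For $(c) \Rightarrow (a)$, I would first observe that the inequality $A\lambda + E\mathds{1}_p < 0$ with $E\mathds{1}_p \ge 0$ implies $A\lambda < 0$, so Theorem \ref{th:stab} gives that $A$ is Hurwitz. To establish the gain bound, I would run a comparison argument: take any $w$ with $\|w\|_{L_\infty} \le 1$, so $-\mathds{1}_p \le w(t) \le \mathds{1}_p$, and compare the state $x(t)$ to the constant vector $\lambda$. Setting $y(t) := \lambda - x(t)$ and choosing (without loss of generality for the supremum computation) $x_0 = 0 \le \lambda$, one gets $\dot y = Ay + (-A\lambda - Ew)$ where the forcing term $-A\lambda - Ew \ge -A\lambda - E\mathds{1}_p > 0$. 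Since $A$ is Metzler, the comparison principle for positive systems yields $y(t) \ge 0$, i.e., $x(t) \le \lambda$ for all $t \ge 0$. Combined with $x(t) \ge -\lambda$ obtained symmetrically (or by using the nonnegativity of the impulse response and dominating by $w \le \mathds{1}_p$), we then get $z(t) \le C_z\lambda + F_z\mathds{1}_p < \gamma\mathds{1}_q$, which delivers $\|w\mapsto z\|_{L_\infty-L_\infty} < \gamma$.

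For $(a) \Rightarrow (b)$, asymptotic stability is Hurwitzness of $A$. Feeding the admissible constant input $w(t) \equiv \mathds{1}_p$ (which has unit $L_\infty$ norm), the state converges to $x_\infty = -A^{-1}E\mathds{1}_p$ and the output to $z_\infty = (-C_zA^{-1}E + F_z)\mathds{1}_p$. Since $\|z\|_{L_\infty} \le \|w \mapsto z\|_{L_\infty-L_\infty} \cdot \|w\|_{L_\infty} < \gamma$ and $z(t) \to z_\infty$, taking the limit gives $z_\infty \le \gamma \mathds{1}_q$; a short argument using strictness (e.g., by applying the bound to a slightly rescaled input $(1+\eta)\mathds{1}_p$) upgrades this to the strict inequality $(-C_zA^{-1}E + F_z)\mathds{1}_p < \gamma \mathds{1}_q$.

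For $(b) \Rightarrow (c)$, I would construct $\lambda$ explicitly as $\lambda := -A^{-1}E\mathds{1}_p + \varepsilon\mu$, where $\mu \in \mathbb{R}^n_{>0}$ is supplied by Theorem \ref{th:stab} and satisfies $A\mu < 0$, with $\varepsilon > 0$ to be chosen. Since $A$ is Metzler and Hurwitz, $-A^{-1} \ge 0$ componentwise, so $-A^{-1}E\mathds{1}_p \ge 0$ and $\lambda > 0$ for any $\varepsilon > 0$. A direct computation then yields $A\lambda + E\mathds{1}_p = \varepsilon A\mu < 0$, and $C_z\lambda + F_z\mathds{1}_p - \gamma\mathds{1}_q = (-C_zA^{-1}E + F_z)\mathds{1}_p - \gamma\mathds{1}_q + \varepsilon C_z\mu$, which is strictly negative for $\varepsilon > 0$ sufficiently small by the strictness in (b).

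The main obstacle I anticipate is the $(c) \Rightarrow (a)$ step: carefully justifying that $x(t) \le \lambda$ via the comparison principle for Metzler dynamics (including the case of sign-changing inputs through the nonnegativity of the impulse response $e^{At}E$) is where the positivity structure really does the heavy lifting that replaces a more involved operator-norm estimate in the general non-positive setting.
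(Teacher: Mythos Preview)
The paper does not actually prove Theorem~\ref{th:Linf}: it is stated as a preliminary result cited from \cite{Briat:11g,Briat:11h}, with no argument given in this paper. So there is no in-paper proof to compare against.

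Your cycle $(c)\Rightarrow(a)\Rightarrow(b)\Rightarrow(c)$ is the standard route and is essentially correct. The comparison argument for $(c)\Rightarrow(a)$ is fine: with $y=\lambda-x$ and $\tilde y=\lambda+x$ you get positive systems with nonnegative forcing and positive initial data, hence $-\lambda\le x(t)\le\lambda$, and the output bound follows from $C_z,F_z\ge0$. The construction $\lambda=-A^{-1}E\mathds{1}_p+\eps\mu$ in $(b)\Rightarrow(c)$ is exactly the right idea.

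One small wrinkle in $(a)\Rightarrow(b)$: your ``rescaling by $(1+\eta)\mathds{1}_p$'' remark is unnecessary and, as written, goes the wrong way (that input has norm $1+\eta$, not $1$). You already have what you need directly: for $w\equiv\mathds{1}_p$ and $x(0)=0$, the trajectory $x(t)=(-A^{-1}+A^{-1}e^{At})E\mathds{1}_p$ is componentwise nondecreasing (since $\dot x(t)=e^{At}E\mathds{1}_p\ge0$), so $z(t)$ increases to $z_\infty=(-C_zA^{-1}E+F_z)\mathds{1}_p$ and
\[
\max_i (z_\infty)_i \;=\; \|z\|_{L_\infty}\;\le\;\|w\mapsto z\|_{L_\infty-L_\infty}\;<\;\gamma,
\]
which is already the strict inequality $(-C_zA^{-1}E+F_z)\mathds{1}_p<\gamma\mathds{1}_q$. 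No extra trick is required.
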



\section{A class of $L_\infty$-to-$L_\infty$ interval-observers}\label{sec:obs}

\subsection{Preliminaries}

Let us consider now the following system
\begin{equation}\label{eq:systy}
  \begin{array}{rcl}
    \dot{x}(t)&=&Ax(t)+Ew(t),\ x(0)=x_0\\
    y(t)&=&C x(t)+F w(t)
  \end{array}
\end{equation}
where $x,x_0\in\mathbb{R}^n$, $w\in\mathbb{R}^p$, $y\in\mathbb{R}^r$ are the state of the system, the initial condition, the persistent disturbance input and the measured output. Note that this system is not necessarily positive. We are interested in finding an interval-observer of the form
\begin{equation}\label{eq:obs}
\begin{array}{rcl}
      \dot{x}^\bullet(t)&=&Ax^\bullet(t)+Ew^\bullet(t)+Lm^\bullet(t)\\
      m^\bullet(t)&=&y(t)-C x^\bullet(t)-F w^\bullet(t)\\
      x^\bullet(0)&=&x_0^\bullet
\end{array}
\end{equation}
where $\bullet\in\{-,+\}$.
Above, the observer with the superscript ``$+$'' is meant to estimate an upper-bound on the state value whereas the observer with the superscript ``-'' is meant to estimate a lower-bound; i.e. we would like to have $x^-(t)\le x(t)\le x^+(t)$ for all $t\ge0$ provided that $x_0^-\le x_0\le x_0^+$. The signals $w^-,w^+\in L_\infty(\mathbb{R}_{\ge0},\mathbb{R}^p)$ are the lower- and the upper-bound on the disturbance $w(t)$, i.e. $w^-(t)\le w(t)\le w^+(t)$ for all $t\ge0$. We then accordingly define the errors  $e^+(t):=x^+(t)-x(t)$ and $e^-(t):=x(t)-x^-(t)$ that are described by the models
\begin{equation}\label{eq:error}
\begin{array}{rcl}
    \dot{e}^\bullet(t)&=&(A-LC )e^\bullet(t)+(E-LF )\delta^\bullet(t)\\
    \zeta^\bullet(t)&=&M^\bullet e^\bullet
\end{array}
\end{equation}
where $\bullet\in\{-,+\}$ and where $\delta^+(t):=w^+(t)-w(t)\in\mathbb{R}_{\ge0}^p$, $\delta^-(t):=w(t)-w^-(t)\in\mathbb{R}_{\ge0}^p$ are the input errors. The matrices $M^-,M^+\in\mathbb{R}^{q\times n}_{\ge0}$ are the nonzero matrices driving the observation errors $e^\bullet$ to the observed outputs $\zeta^\bullet$, that we assume to be chosen a priori.

\begin{remark}
  It is important to stress here that the choice of a common observer gain $L$ in \eqref{eq:obs} seems to be restrictive at first sight. This is quite surprisingly not the case as it will be shown later that the same optimal gain will simultaneously minimize the $L_\infty$-gain of both the transfers $\delta^-\mapsto\zeta^-$ and $\delta^+\mapsto\zeta^+$, even  when $M^-\ne M^+$. In this regard, the values of the matrices $M^-$ and $M^+$ do not matter at all when designing the optimal gain $L$. This will be discussed in more details in Section \ref{sec:optL1obs}.
\end{remark}

With all these elements in mind, we can state the considered observation problem:
\begin{problem}\label{problem}
  Find an interval observer of the form \eqref{eq:obs} such that
  \begin{enumerate}
    \item The linear systems in \eqref{eq:error} are positive, i.e. $A-LC $ is Metzler and $E-LF $ is nonnegative;
    \item The linear systems in \eqref{eq:error} are asymptotically stable; i.e. $A-LC $ is Hurwitz stable;
    \item The $L_\infty$-gains of the transfers $\delta^\bullet\mapsto\zeta^\bullet$, $\bullet\in\{-,+\}$, are minimal.
  \end{enumerate}
\end{problem}

\begin{remark}
More complex observer structures can also be considered in order to reduce the conservatism of the approach. For instance, high-order observers such as the ones proposed in \cite{Blanchini:12} may be considered. Alternatively, a relaxed version of the proposed one which takes the form
\begin{equation}\label{eq:relaxobs}
  \begin{array}{lcl}
  \dot{\tilde{x}}^{+}(t)&=&[A-LC]\tilde{x}^{+}(t)+Ly(t)\\
&&+[E-LF]^{+}w^{+}(t)-[E-LF]^{-}w^{-}(t),\\
\dot{\tilde{x}}^{-}(t)&=&[A-LC]\tilde{x}^{-}(t)+Ly(t)\\
&&+[E-LF]^{+}w^{-}(t)-[E-LF]^{-}w^{+}(t)
  \end{array}
\end{equation}
where $[E-LF]^{+}=\max\{0,E-LF\}$ and $[E-LF]^{-}=[E-LF]^{+}-[E-LF]$ can also be considered. The advantage of the above one lies in the fact that the nonnegativity of the matrix $E-LF$ is not required anymore. However, when an interval observer of the form \eqref{eq:obs} exists, it may be preferable to use it since it captures more information about the system. This will be illustrated in the examples treated in Section \ref{sec:ex}.
\end{remark}

\begin{remark}
  As it may be sometimes difficult to find a matrix $L$ that makes the matrix $A-LC$ both Metzler and Hurwitz stable, the determination of  a nonsingular matrix $P$ and a matrix $L$ such that the matrix $P(A-LC)P^{-1}$ is Metzler and Hurwitz stable is often considered. Suitable pairs $(P,L)$ can be computed using various methods; see e.g. \cite{Raissi:12}.
\end{remark}


%

\subsection{The uniformity of $L_\infty$-to-$L_\infty$ interval observers}\label{sec:optL1obs}

For the sake of generality, let us consider now the following slightly more general system:
    \begin{equation}\label{eq:syst2}
      \Sigma:\left\{\begin{array}{lcl}
      \dot{\xi}(t)&=&(A -LC  )\xi(t)+(E - LF)\omega(t)\\
        \chi(t)&=&M\xi(t)+N\omega(t)
      \end{array}\right.
    \end{equation}
    where $\xi\in\mathbb{R}^{n}$, $\omega\in\mathbb{R}_{\ge0}^{p}$, $\chi\in\mathbb{R}^{q}$ and $L\in\mathbb{R}^{n\times r}$. The matrices are defined such that their dimension corresponds to those of the signals. The only assumption is that both $ M$ and $N$ are nonnegative and that $M$ is nonzero. We now address the question stated in Problem \ref{problem} where the matrix $N$ is now considered and $(x,\delta^\bullet,\zeta^\bullet)$ are replaced by $(\xi,\omega,\chi)$.

     Formally speaking, we are interested in solving the problem
    \begin{equation}
      \gamma^*=\inf_{L \in\mathcal{L}}||\omega\mapsto\chi||_{L_\infty-L_\infty}
    \end{equation}
    where
    \begin{equation}
      \mathcal{L}:=\left\{ L \in\mathbb{R}^{n\times r}\left|\begin{array}{c}
        A-LC \ \textnormal{Metzler and Hurwitz},\\
         E-LF\ge0
      \end{array}\right. \right\}
    \end{equation}
    and in computing the matrix $L^*$ that achieves this infimum. To this aim, let us define $\gamma^*:=\gamma^*(\Sigma(M  ,N ))$ and $L^*:=L^*(\Sigma(M  ,N  ))$. The main rationale for the notation $\Sigma(M ,N)$ stems from the property of uniformity of the optimal $L_\infty$-to-$L_\infty$ interval observers stated in the result below:
    \begin{lemma}\label{lem:SISO}
    Assume that the system  \eqref{eq:syst2} is MISO; i.e. $q=1$. Then, the membership relation
      \begin{equation}\label{eq:Lstar}
    L^*(\Sigma(\mathds{1}_{n}^T,\mathds{1}_{p}^T))\in\mathcal{L}^*(\Sigma(M,N))
      \end{equation}
      holds for all $(M,N)\in\mathbb{R}^{1\times n}_{\ge0}\times\mathbb{R}^{1\times p}_{\ge0}$ where
          \begin{equation}
      \mathcal{L}^*(\Sigma(M,N )):=\left\{L\in\mathcal{L}:||\Sigma(M,N)||_{L_\infty-L_\infty}=\gamma^*\right\}.
    \end{equation}
    \end{lemma}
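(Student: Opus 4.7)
My plan is to leverage Theorem~\ref{th:Linf}(b), which provides a closed-form expression for the $L_\infty$-gain of the positive system $\Sigma(M,N)$, and then reduce the uniformity statement to a componentwise minimality property for a single nonneg vector that is independent of $(M,N)$.

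For every $L \in \mathcal{L}$, the matrix $A-LC$ is Metzler and Hurwitz, $E-LF \ge 0$, and $(M,N)$ is nonneg, so $\Sigma(M,N)$ is positive and asymptotically stable. Since $q=1$, Theorem~\ref{th:Linf}(b) yields the scalar-valued closed form
$$\|\Sigma(M,N)\|_{L_\infty-L_\infty} = M v(L) + N\mathds{1}_p, \qquad v(L) := -(A-LC)^{-1}(E-LF)\mathds{1}_p \ge 0,$$
in which the dependence on $(M,N)$ is carried entirely through the nonneg coefficients applied to the observer-induced vector $v(L)$. Hence, if I can show that $L_0 := L^*(\Sigma(\mathds{1}_n^T, \mathds{1}_p^T))$ satisfies the componentwise inequality $v(L_0) \le v(L)$ for every $L \in \mathcal{L}$, then for any $(M,N) \ge 0$ the estimate $M v(L_0) + N\mathds{1}_p \le M v(L) + N\mathds{1}_p$ holds for all $L \in \mathcal{L}$, and hence $L_0 \in \mathcal{L}^*(\Sigma(M,N))$, as claimed.

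The crux is therefore the componentwise minimality of $v(L_0)$. I would prove it through the joint LP of Theorem~\ref{th:Linf}(c): $\inf \mathds{1}_n^T\lambda + p$ subject to $\lambda > 0$, $(A-LC)\lambda + (E-LF)\mathds{1}_p \le 0$, and the (linear) Metzler/nonneg constraints on $L$. Multiplying the first inequality on the left by the nonneg matrix $-(A-LC)^{-1}$ shows it is equivalent to $\lambda \ge v(L)$, so the projection of the feasible set onto the $\lambda$-axis is the \emph{upward-closed} set $\Lambda = \bigcup_{L \in \mathcal{L}}\{\lambda : \lambda \ge v(L)\}$. The uniform minimum is attained at $\lambda_0 = v(L_0) \in \Lambda$, and it remains to show that $\lambda_0$ is the \emph{unique} minimal element of $\Lambda$, namely $\lambda_0 \le \lambda$ for every $\lambda \in \Lambda$.

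I expect this minimality to follow from an implicit-function perturbation analysis of $v$: differentiating $(A-LC)v(L) + (E-LF)\mathds{1}_p = 0$ yields
$$\partial_L v(L)\cdot \Delta L = (A-LC)^{-1}\,\Delta L\,(Cv(L) + F\mathds{1}_p),$$
which, combined with complementary slackness, shows that any admissible perturbation $\Delta L$ at $L_0$ either violates a binding Metzler or nonneg constraint or simultaneously decreases $\mathds{1}_n^T v$, contradicting the optimality of $L_0$ for the uniform problem. The principal technical difficulty is the bilinear coupling $LC\lambda$ and the nonconvexity of $\mathcal{L}$ caused by the Hurwitz requirement; I would handle this by working in the closure where Hurwitz is replaced by the existence of a positive supersolution, and by a case enumeration of the linear constraints on $L$ that are active at the uniform optimum $L_0$.
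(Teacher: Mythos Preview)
Your reduction is correct and is precisely the dual of the argument the paper defers to in \cite{Ebihara:12}: since $q=1$, Theorem~\ref{th:Linf}(b) gives the scalar expression $\|\Sigma(M,N)\|_{L_\infty-L_\infty}=Mv(L)+N\mathds{1}_p$ with $v(L):=-(A-LC)^{-1}(E-LF)\mathds{1}_p$, and the lemma is therefore \emph{equivalent} to the claim that $v(L_0)\le v(L)$ componentwise for every $L\in\mathcal{L}$, where $L_0=L^*(\Sigma(\mathds{1}_n^T,\mathds{1}_p^T))$. Up to this point you and the paper (via \cite{Ebihara:12}) agree.

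The gap is in your argument for the componentwise minimality itself. Knowing that $L_0$ minimises the \emph{sum} $\mathds{1}_n^Tv(L)$ does not, through first-order conditions alone, force $v(L_0)$ to be the componentwise least element of $\{v(L):L\in\mathcal{L}\}$: an upward-closed set may have many Pareto-minimal points, each stationary for some positive weighting yet none dominating the others. Your derivative formula $\partial_Lv\cdot\Delta L=(A-LC)^{-1}\Delta L\,(Cv+F\mathds{1}_p)$ is correct, but the conclusion you draw---that every admissible $\Delta L$ either activates a constraint or decreases $\mathds{1}_n^Tv$---merely re-establishes optimality of $L_0$ for the sum; it says nothing about the individual coordinates $e_i^Tv$. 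Two further obstructions to the sketched perturbation/KKT route: (i) $C$ and $F$ are \emph{not} assumed nonnegative (the plant \eqref{eq:systy} is not required to be positive), so the scalar $Cv(L)+F\mathds{1}_p$ has no sign and the derivative has no monotone structure to exploit; (ii) complementary slackness for the bilinear program in $(L,\lambda)$ is not standard and the case enumeration you allude to is not described. What is actually needed---and what the argument in \cite{Ebihara:12} supplies in the dual setting---is a structural property specific to positive systems showing that the image $\{v(L):L\in\mathcal{L}\}$ admits a least element, not a generic local-optimality analysis.
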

    \begin{proof}
          The proof uses the same arguments as for proving Lemma 4 in \cite{Ebihara:12} and is omitted for brevity.
    \end{proof}

    The above result says that, for a MISO system of the form \eqref{eq:syst2}, there exists a value for $L$ that simultaneously minimizes the $L_\infty$-gain of the system $\Sigma(M,N  )$ for all matrices $(M  ,N)\in\mathbb{R}^{1\times n}_{\ge0}\times\mathbb{R}^{1\times p}_{\ge0}$. This value can be, moreover, calculated by considering the system $\Sigma(\mathds{1}_{n}^T,\mathds{1}_{p}^T)$ and Theorem \ref{th:Linf}. The issue here is that this result only holds for MISO systems but can be generalized using the fact that the $L_\infty$-gain of $\Sigma$ can be expressed as $\textstyle\max_{i=1,\ldots,q}\{||\Sigma_i||_{L_\infty-L_\infty}\}$, where $\Sigma_i$ maps $\omega$ to $\chi_i$. However, Theorem \ref{th:Linf} cannot be simply considered for all the $\Sigma_i$'s independently as each subsystem $\Sigma_i$ would require its own Lyapunov function; i.e. the use of different $\lambda$'s. A simple solution is to enforce the $\lambda$'s to be all the same, which is equivalent to searching for a common Lyapunov function for all the subsystems and is known to be a conservative procedure in general. This is luckily not the case here as proved in the next two results:
    %
    \begin{lemma}\label{th:rankone}
        Let $W\in\mathbb{R}^{n\times n}$ be a Metzler and Hurwitz stable matrix and let $u,v_i\in\mathbb{R}^{n}_{\ge0}$, $i=1,\ldots,q$, be nonnegative vectors. Then, the following statements are equivalent
        \begin{enumerate}
          \item The matrices $W+uv_i^T$ are Hurwitz stable for all $i=1,\ldots,q$.
          \item There exists a $\psi\in\mathbb{R}_{>0}^{n}$ such that $(W+uv_i^T)\psi<0$ holds for all $i=1,\ldots,q$.
        \end{enumerate}
    \end{lemma}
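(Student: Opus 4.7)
The plan is to prove the nontrivial direction (a)$\Rightarrow$(b); the converse is immediate from Theorem~\ref{th:stab}, since each $W+uv_i^T$ is Metzler (the sum of a Metzler matrix and a nonnegative rank-one matrix), so any $\psi>0$ certifying condition (b) also certifies Hurwitz stability of every perturbed matrix simultaneously.

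For (a)$\Rightarrow$(b), I first extract two ingredients from $W$ being Metzler Hurwitz that I will use throughout: the matrix $P:=-W^{-1}$ is entrywise nonnegative (a standard M-matrix fact), and Theorem~\ref{th:stab} supplies some $\mu\in\mathbb{R}^n_{>0}$ with $W\mu<0$. Next I reduce Hurwitz stability of each perturbed matrix to a scalar condition by exploiting the rank-one nature of the perturbation. Using the matrix determinant lemma,
\begin{equation*}
\det(W+uv_i^T) \;=\; \det(W)\bigl(1 - v_i^T P u\bigr),
\end{equation*}
and noting that both sides are determinants of Metzler Hurwitz matrices of size $n$, hence both of sign $(-1)^n$, assumption (a) forces the scalar inequality $v_i^T P u<1$ for every $i$.

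The final and main step is to build a single $\psi$ that works for all $i$. Motivated by the identity $W(Pu)=-u$, I propose the ansatz $\psi := t\,Pu + \mu$ with $t>0$ to be chosen; positivity $\psi>0$ is automatic since $Pu\geq 0$ and $\mu>0$. A direct computation yields
\begin{equation*}
(W+uv_i^T)\psi \;=\; W\mu \;+\; \bigl(-t\,(1-v_i^T P u) + v_i^T\mu\bigr)\,u,
\end{equation*}
and since each scalar $1-v_i^T P u$ is strictly positive, taking $t$ large enough (for instance $t = 1 + \max_i (v_i^T\mu)/(1-v_i^T P u)$) forces the parenthesized factor to be $\leq 0$ for every $i$, giving $(W+uv_i^T)\psi\leq W\mu<0$ uniformly in $i$. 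The main conceptual hurdle I anticipate is recognizing that the rank-one structure collapses the $q$ a priori independent Lyapunov conditions into the scalar family $v_i^T P u<1$, and then guessing the ansatz $\psi=tPu+\mu$ that exactly decouples the $i$-dependent cancellation from the fixed strict-negativity reserve supplied by $W\mu$.
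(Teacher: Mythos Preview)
Your proof is correct. The paper itself does not give a proof; it only states that the argument is dual to Lemma~4 in \cite{Ebihara:12} and is based on Farkas' Lemma. Your route is genuinely different: instead of invoking a theorem of the alternative, you exploit the rank-one structure directly via the matrix determinant lemma to extract the scalar conditions $v_i^TPu<1$, and then produce an explicit common certificate $\psi=tPu+\mu$. The Farkas-based approach is perhaps more systematic and transposes cleanly to the dual $L_1$ setting in \cite{Ebihara:12}, whereas your argument is more elementary and, importantly, constructive---it hands you a concrete $\psi$ with an explicit lower bound on the admissible $t$. One small remark: your sign argument for $\det(W+uv_i^T)$ is fine, but you could also bypass it by noting that Hurwitz stability of the Metzler matrix $W+uv_i^T$ is equivalent to $-(W+uv_i^T)$ being a nonsingular M-matrix, whose inverse (via Sherman--Morrison) is nonnegative precisely when $1-v_i^TPu>0$; this avoids the determinant-sign bookkeeping altogether.
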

    \begin{proof}
       This result is dual to the one obtained in \cite{Ebihara:12} and its proof, based on Farkas' Lemma, is omitted for brevity.
    \end{proof}
\begin{lemma}\label{lem:glab2}
Let $\gamma\in\mathbb{R}_{>0}$ and $L\in\mathcal{L}$. Then, the following statements are equivalent:
\begin{enumerate}
  \item $||\Sigma(M,N)||_{L_\infty-L_\infty}<\gamma$.
  \item The  matrices
  \begin{equation}
\begin{bmatrix}
     A-LC  & (E-LF)\mathds{1}_p\\
      \e_i^TM & e_i^TN-\gamma
    \end{bmatrix},\ i=1,\ldots,q
  \end{equation}
  are Metzler and Hurwitz stable.
  \item $||\Sigma(e_i^TM,e_i^TN)||_{L_\infty-L_\infty}<\gamma$,\ $i=1,\ldots,q$.
\end{enumerate}
\end{lemma}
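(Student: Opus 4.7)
The plan is to establish the chain (1)$\iff$(3)$\iff$(2). The equivalence (1)$\iff$(3) is essentially a "decoupling" of the vector-valued output into scalar channels, while (3)$\iff$(2) will follow by applying Theorem \ref{th:Linf} to each MISO subsystem and rewriting the resulting linear-programming conditions as a stability/Metzler condition on an augmented matrix.

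For (1)$\iff$(3), I would first observe that since $L\in\mathcal{L}$, $M\geq 0$ and $N\geq 0$, the system $\Sigma(M,N)$ is positive, so for $\omega\geq 0$ the output $\chi(t)$ is entry-wise nonnegative and $||\chi(t)||_\infty=\max_i e_i^T\chi(t)$. Since the $L_\infty$-gain is attained on nonnegative inputs (by standard positive-system arguments, the worst-case input can be taken nonnegative), I would exchange supremum over $\omega$ with the finite maximum over $i$:
\begin{equation*}
||\Sigma(M,N)||_{L_\infty-L_\infty}=\max_{i=1,\ldots,q}||\Sigma(e_i^TM,e_i^TN)||_{L_\infty-L_\infty},
\end{equation*}
from which (1) and (3) are clearly equivalent.

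For (3)$\iff$(2), I would apply Theorem \ref{th:Linf} (statement 3) to each scalar-output subsystem $\Sigma(e_i^TM,e_i^TN)$, which is MISO and positive. This yields the equivalence of $||\Sigma(e_i^TM,e_i^TN)||_{L_\infty-L_\infty}<\gamma$ with the existence of $\lambda_i\in\mathbb{R}^n_{>0}$ such that $(A-LC)\lambda_i+(E-LF)\mathds{1}_p<0$ and $e_i^TM\lambda_i+e_i^TN\mathds{1}_p-\gamma<0$. Stacking these two inequalities gives $\tilde A_i\psi_i<0$ with $\psi_i=(\lambda_i^T,1)^T>0$, where $\tilde A_i$ denotes the $(n+1)\times(n+1)$ augmented matrix in statement (2). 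Since $A-LC$ is Metzler and $(E-LF)\mathds{1}_p$, $e_i^TM$ are nonnegative, $\tilde A_i$ is automatically Metzler, so by Theorem \ref{th:stab} the existence of such $\psi_i$ is equivalent to $\tilde A_i$ being Hurwitz. For the converse direction, I would start from a Perron-type vector $\mu_i>0$ with $\tilde A_i\mu_i<0$, partition it as $\mu_i=(\mu_{i,1}^T,\mu_{i,2})^T$ and normalize by $\mu_{i,2}>0$ to recover a valid $\lambda_i$.

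The main subtlety is the normalization step in the (2)$\Rightarrow$(3) direction: one has to argue that the last component of the Perron-type certificate $\mu_i$ can always be taken strictly positive so that dividing by it yields the desired $\lambda_i$, and this follows directly from Theorem \ref{th:stab} which produces a strictly positive $\mu_i$. A second mild subtlety is justifying that nonnegative inputs suffice in the first step — standard for positive systems but worth mentioning — after which the whole proof reduces to routine bookkeeping between LP certificates and Metzler/Hurwitz augmented matrices.
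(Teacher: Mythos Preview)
Your argument is correct. For (1)$\iff$(3) you use the row-wise decomposition $||\Sigma(M,N)||_{L_\infty-L_\infty}=\max_i||\Sigma(e_i^TM,e_i^TN)||_{L_\infty-L_\infty}$, and for (3)$\iff$(2) you package the LP certificate of Theorem~\ref{th:Linf} into the augmented matrix and invoke Theorem~\ref{th:stab}; the normalization by the last (strictly positive) component of $\mu_i$ is exactly the right move. One small remark: the row-wise decomposition holds for \emph{any} LTI system with the $\infty$-norm on outputs (a finite $\max$ commutes with $\sup$), so your detour through nonnegative inputs is unnecessary.

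The paper takes a different route for the direction (b)$\Rightarrow$(a). Rather than appealing to the row-wise decomposition, it observes that the augmented matrices in (b) all have the form $W+e_{n+1}v_i^T$ for a fixed Metzler--Hurwitz $W$ and nonnegative $v_i$, and then applies Lemma~\ref{th:rankone} to produce a \emph{common} vector $\psi>0$ satisfying $\tilde A_i\psi<0$ for all $i$ simultaneously; normalizing yields a single $\lambda$ valid in Theorem~\ref{th:Linf} for the full MIMO system $\Sigma(M,N)$. Your approach is more elementary and perfectly adequate for Lemma~\ref{lem:glab2} as stated. The paper's approach, however, is what pays off downstream: the existence of a common $\lambda$ is precisely what turns the design problem of Theorem~\ref{th:L1obs} into a single linear program rather than $q$ coupled ones, so Lemma~\ref{th:rankone} is doing essential work for the main result even though it can be bypassed here.
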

\begin{proof}
  The proof follows the same lines as the proof of Lemma 6 in \cite{Ebihara:12}, it is therefore only sketched. The equivalence between (b) and (c)  follows from Theorem \ref{th:stab}. The implication (a)$\Rightarrow$(b) is straightforward whereas the reverse implication follows from an application of Lemma \ref{th:rankone}.
\end{proof}

%

\subsection{Design of $L_\infty$-to-$L_\infty$ interval observers}

By combining the results stated in Theorem \ref{th:Linf}, Lemma \ref{lem:SISO} and Lemma \ref{lem:glab2}, we can state the main result of the paper:
\begin{theorem}\label{th:L1obs}
The following statements are equivalent:
\begin{enumerate}
  \item There exists an optimal $L_\infty$-to-$L_\infty$ interval-observer of the form \eqref{eq:obs} for the system \eqref{eq:systy}.
  %
  \item We have that
  \begin{equation}\label{eq:uni1}
     L^*(\Sigma(\mathds{1}_{n}^T,\mathds{1}_{p}^T))\in\mathcal{L}^*(\Sigma(M,N))
      \end{equation}
  for all $(M,N)\in\mathbb{R}^{q\times n}_{\ge0}\times\mathbb{R}^{q\times p}_{\ge0}$. Moreover, the optimal gain is given by $L^*(\Sigma(\mathds{1}_{n}^T,\mathds{1}_{p}^T))=(X^*)^{-1}U^*$ where the pair $(X^*,U^*)$ is the optimal solution of the linear program
  {   \begin{equation}
        \inf_{X,U,\alpha,\gamma} \gamma
      \end{equation}
     where $U\in\mathbb{R}^{n\times r}$, $\alpha\in\mathbb{R}$, $X\in\mathbb{R}^{n\times n}$ diagonal, $X\mathds{1}_n>0$,
\begin{equation}\label{eq:L1:cond1}
  \begin{array}{ccc}
  XA-UC +\alpha I_n\ge0, &&  XE-UF \ge0
  \end{array}
\end{equation}
and
  \begin{equation}\label{eq:L1:cond2}
  \begin{bmatrix}
    \mathds{1}_n\\
    1
  \end{bmatrix}^T\begin{bmatrix}
    XA-UC  &&     (XE-UF )\mathds{1}_p\\
    \mathds{1}_n^T && q-\gamma
  \end{bmatrix}<0
\end{equation}}
  %
\end{enumerate}
\end{theorem}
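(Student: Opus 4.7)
The strategy is to splice together three existing ingredients: Theorem~\ref{th:Linf} for the characterization of the $L_\infty$-gain of positive systems, Lemma~\ref{lem:SISO} for the uniformity of the optimal gain across single-output performance choices, and Lemma~\ref{lem:glab2} for the rowwise decomposition of the MIMO gain without common-Lyapunov conservatism. Both implications (a)$\Leftrightarrow$(b) will then fall out once the bilinear gain conditions are linearized by a diagonal change of variables.

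First I would dispose of the uniformity claim \eqref{eq:uni1}. Fix an arbitrary $(M,N)\in\mathbb{R}^{q\times n}_{\ge0}\times\mathbb{R}^{q\times p}_{\ge0}$; for any admissible $L\in\mathcal{L}$, Lemma~\ref{lem:glab2} gives $\|\Sigma(M,N)\|_{L_\infty-L_\infty}=\max_{i=1,\ldots,q}\|\Sigma(e_i^{T}M,e_i^{T}N)\|_{L_\infty-L_\infty}$. Each summand is MISO and therefore subject to Lemma~\ref{lem:SISO}, which asserts that $L^*:=L^*(\Sigma(\mathds{1}_n^T,\mathds{1}_p^T))$ simultaneously minimizes the $L_\infty$-gain of every MISO system with nonnegative row-vector performance outputs. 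Since $L^*$ minimizes each term of the max, it minimizes the max itself, so $L^*\in\mathcal{L}^*(\Sigma(M,N))$ for every $(M,N)$, which is \eqref{eq:uni1}.

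Next I would derive the LP. By the uniformity just proven it is enough to minimize the gain of the single system $\Sigma(\mathds{1}_n^T,\mathds{1}_p^T)$, and Theorem~\ref{th:Linf} supplies a $\lambda\in\mathbb{R}^n_{>0}$ for which the Metzler, nonnegativity, Hurwitz and gain inequalities all hold; these are bilinear in $L,\lambda$ through $L\lambda$ and $LF$. I would linearize by setting $X=\diag(\lambda)$ and $U=XL$: because $X$ is diagonal and positive, $X(A-LC)$ is Metzler iff $A-LC$ is, and $X(E-LF)\ge0$ iff $E-LF\ge0$, so the structural requirements become \eqref{eq:L1:cond1} with $\alpha$ absorbing the diagonal shift $X(A-LC)+\alpha I_n\ge0$. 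The Hurwitz-plus-gain inequality of Theorem~\ref{th:Linf}, written on the augmented $(n{+}1)\times(n{+}1)$ block that couples $X(A-LC)$ with the feedthrough $(XE-UF)\mathds{1}_p$ and the MISO output row, transforms into the tested condition \eqref{eq:L1:cond2}. The direction (b)$\Rightarrow$(a) is then immediate: a feasible LP point yields $L=X^{-1}U$ whose error dynamics \eqref{eq:error} are, by Theorems~\ref{th:stab}--\ref{th:Linf}, positive, asymptotically stable and of gain at most $\gamma$, while the uniformity step upgrades optimality from the canonical system $\Sigma(\mathds{1}_n^T,\mathds{1}_p^T)$ to every $(M,N)$. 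The converse (a)$\Rightarrow$(b) reverses this chain, using Theorem~\ref{th:Linf} to extract a $\lambda$ from the optimal observer and then producing the LP solution via $X=\diag(\lambda)$, $U=XL^*$.

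The main obstacle is conceptual rather than computational: naively invoking Theorem~\ref{th:Linf} row-by-row on a MIMO error system forces a common Lyapunov vector $\lambda$ and is in general conservative, whereas here that conservatism must vanish in order for a single scalar LP to capture the true optimum. This is precisely the role of Lemma~\ref{lem:glab2}, which certifies that the rowwise MISO gain characterization is tight, combined with Lemma~\ref{lem:SISO}, which guarantees that a single MISO optimizer $L^*$ is valid uniformly across rows. The bookkeeping point to handle carefully is that the diagonal $X$ in \eqref{eq:L1:cond2} must realize the common $\lambda$ required by Lemma~\ref{lem:glab2} for every one of the $q$ output rows at once, so that the single LP delivers simultaneously the uniform optimal gain $L^*$ and the optimal performance level $\gamma^*$.
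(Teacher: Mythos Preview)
Your proposal is correct and follows essentially the same route as the paper's own proof: reduce the MIMO uniformity claim to rowwise MISO via Lemma~\ref{lem:glab2}, invoke Lemma~\ref{lem:SISO} on each row to obtain \eqref{eq:uni1}, and then linearize the Theorem~\ref{th:Linf} conditions for $\Sigma(\mathds{1}_n^T,\mathds{1}_p^T)$ through the change of variables $X=\diag(\lambda)$, $U=XL$. The only cosmetic slip is that the identity $\|\Sigma(M,N)\|_{L_\infty-L_\infty}=\max_i\|\Sigma(e_i^TM,e_i^TN)\|_{L_\infty-L_\infty}$ is a general property of the $L_\infty$-gain (stated in the text preceding Lemma~\ref{th:rankone}) rather than a consequence of Lemma~\ref{lem:glab2} itself, whose role is instead to certify that a \emph{single} $\lambda$ suffices across rows; your final paragraph already identifies this correctly.
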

\begin{proof}
The proof is dual to the proof of Theorem 1 in \cite{Ebihara:12}. We can see first that, from Lemma \ref{lem:glab2}, the condition \eqref{eq:uni1} is satisfied if and only if
\begin{equation}
     L^*(\Sigma(\mathds{1}_{n}^T,\mathds{1}_{p}^T))\in\mathcal{L}^*(\Sigma(e_i^TM,e_i^TN))
      \end{equation}
holds for all $i=1,\ldots,q$ and all $(M,N)\in\mathbb{R}^{q\times n}_{\ge0}\times\mathbb{R}^{q\times p}_{\ge0}$. By virtue of Lemma \ref{lem:SISO}, this condition readily holds. Therefore, all we need to do is to compute the optimal gain $L^*(\Sigma(\mathds{1}_{n}^T,\mathds{1}_{p}^T))$. To see that this is performed by the linear program stated in the result, first note that the conditions $ XA-UC +\alpha I_n\ge0$ and $XE-UF \ge0$ are equivalent to saying that the matrix $A-LC$ is Metzler and that the matrix $E-LC$ is nonnegative whenever considering the change of variables $U=XL$. Using the same change of variables, the condition \eqref{eq:L1:cond2} is equivalent to saying that
\begin{equation}
  \begin{bmatrix}
    \mathds{1}_n\\
    1
  \end{bmatrix}^T\begin{bmatrix}
    X & 0\\
    0 & 1
  \end{bmatrix}\begin{bmatrix}
    A-LC  &&     (E-LF )\mathds{1}\\
    \mathds{1}^T && q-\gamma
  \end{bmatrix}<0.
\end{equation}
which is, in turn, equivalent to saying that the $L_\infty$ of the system $\Sigma(\mathds{1}_{n}^T,\mathds{1}_{p}^T)$ is smaller than $\gamma>0$. Therefore, the optimal-gain $L^*(\Sigma(\mathds{1}_{n}^T,\mathds{1}_{p}^T))$ can be computed by solving the linear program stated in the result. The proof is complete.
\end{proof}

\begin{remark}
  It is important to stress here that the minimal $\gamma^*>0$ that is computed using the linear program in Theorem \ref{th:L1obs} is not uniform over the set of $M^-,M^+$ matrices. In this regard, if we are interested in the actual  $L_\infty$-gain for given matrices $M^-,M^+$, one has to first compute the optimal gain $L^*$ using Theorem \ref{th:L1obs} and then compute the actual-gain for the error model using Theorem \ref{th:Linf}. This will be illustrated in the example in Section \ref{sec:ex1}.
\end{remark}

\begin{remark}\label{rem:bounds}
As shown in \cite{Briat:11h}, it is also possible to incorporate explicit bounds on the entries of $L$ in a non-conservative way. To this aim, let us define the matrices $L_{min},L_{max}\in\mathbb{R}^{n\times r}$ verifying $L_{min}\le L_{max}$, componentwise. Assume now that we would like to design a gain $L$ satisfying $L_{min}\le L\le L_{max}$, then it is necessary and sufficient to adjoin to the optimization problem of Theorem \ref{th:L1obs} the linear constraint $XL_{min}\le U\le XL_{max}$. Interestingly, when the $(i,j)$-th entries of $L_{min}$ and $L_{max}$ are identical, then the $(i,j)$'s entry of $L$ will be set to this specific value. This may be used to impose a structural constraint on the matrix $L$, i.e. some entries set to 0.
\end{remark}

\begin{remark}\label{rem:relax}
  In the case where the relaxed observer \eqref{eq:relaxobs} is considered, then an optimal gain $L^*$ can be computed by substituting $XE-UF$ by the identity matrix $I$ in Theorem \ref{th:L1obs}.
\end{remark}

\subsection{Extensions of the results}

We provide here some interesting extensions of the main result.

\subsubsection{Continuous-time systems with delay}

The results can also be straightforwardly extended to systems with delays \cite{Haddad:04,Briat:11h,Briat:book1} since it is known that a linear positive system with delay is stable if and only if the system with zero-delay is stable. By exploiting this connection with nondelayed systems, we can apply the obtained results to time-delay systems of the form:
\begin{equation}\label{eq:delaysyst}
  \begin{array}{rcl}
    \dot{x}(t)&=&Ax(t)+A_hx(t-h)+Ew(t)\\
    y(t)&=&Cx(t)+C_hx(t-h)+Fw(t)\\
    x(s)&=&\varphi(s),\ s\in[-h,0]
  \end{array}
\end{equation}
where $h\ge0$ is the constant delay and $\varphi\in C([-h,0],\mathbb{R}^n)$ is the initial condition. We then propose the following extension to the interval observer \eqref{eq:obs}:
\begin{equation}\label{eq:obsdelay}
\begin{array}{rcl}
      \dot{x}^\bullet(t)&=&Ax^\bullet(t)+A_hx^\bullet(t-h)+Ew^\bullet(t)+Lm^\bullet(t)\\
      m^\bullet(t)&=&y(t)-C x^\bullet(t)-C_h x^\bullet(t-h)-F w^\bullet(t)\\
      x^\bullet(0)&=&x_0^\bullet
\end{array}
\end{equation}
where $\bullet\in\{-,+\}$. The observed outputs $\zeta^\bullet$ are defined as in the delay-free case; i.e. $\zeta^\bullet=M^\bullet e^\bullet$. This leads us to the following result:

\begin{proposition}\label{cor:delay}
The following statements are equivalent:
\begin{enumerate}
  \item There exists an optimal $L_\infty$-to-$L_\infty$ interval-observer of the form \eqref{eq:obsdelay} for the system \eqref{eq:delaysyst}.
  \item We have that
  \begin{equation}\label{eq:uni1delay}
     L^*(\Sigma(\mathds{1}_{n}^T,\mathds{1}_{p}^T))\in\mathcal{L}^*(\Sigma(M,N))
      \end{equation}
  for all $(M,N)\in\mathbb{R}^{q\times n}_{\ge0}\times\mathbb{R}^{q\times p}_{\ge0}$. Moreover, the optimal gain is given by $L^*(\Sigma(\mathds{1}_{n}^T,\mathds{1}_{p}^T))=(X^*)^{-1}U^*$ where the pair $(X^*,U^*)$ is the optimal solution of the linear program
  \begin{equation}
        \inf_{X,U,\alpha,\gamma} \gamma
      \end{equation}
     where $U\in\mathbb{R}^{n\times r}$, $\alpha\in\mathbb{R}$, $X\in\mathbb{R}^{n\times n}$ diagonal, $X\mathds{1}_n>0$, ${XA-UC +\alpha I_n\ge0}$, ${XE-UF \ge0}$, ${XA_h-UC_h\ge0}$
and
  \begin{equation}\label{eq:L1:cond2delay}
  \begin{bmatrix}
    \mathds{1}_n\\
    1
  \end{bmatrix}^T\begin{bmatrix}
    \Psi  &&     (XE-UF)\mathds{1}_p\\
    \mathds{1}_n^T && q-\gamma
  \end{bmatrix}<0
\end{equation}
where $\Psi=X(A+A_h)-U(C+C_h)$.
\end{enumerate}
\end{proposition}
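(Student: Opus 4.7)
The plan is to reduce the delay case to the delay-free case by exploiting the well-known fact that, for a positive linear time-delay system, asymptotic stability and the $L_\infty$-to-$L_\infty$ gain are equal to those of its zero-delay counterpart obtained by lumping delayed and non-delayed terms together. This allows Theorem \ref{th:L1obs} to be re-used almost verbatim with $A$ replaced by $A+A_h$ and $C$ by $C+C_h$, plus an extra nonnegativity constraint on $A_h-LC_h$.

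First I would subtract the observer \eqref{eq:obsdelay} from the plant \eqref{eq:delaysyst} to obtain the error dynamics
\begin{equation*}
\dot{e}^\bullet(t)=(A-LC)e^\bullet(t)+(A_h-LC_h)e^\bullet(t-h)+(E-LF)\delta^\bullet(t),
\end{equation*}
with output $\zeta^\bullet=M^\bullet e^\bullet$, where $\delta^\bullet\ge0$ as in the delay-free section. For this error system to be positive (in the sense that $e^\bullet(t)\ge0$ whenever the initial segment and $\delta^\bullet$ are nonnegative) one needs $A-LC$ Metzler, $A_h-LC_h\ge0$, and $E-LF\ge0$. Through the change of variables $U=XL$ with $X>0$ diagonal, these translate exactly into the linear constraints stated in the proposition: $XA-UC+\alpha I_n\ge0$, $XA_h-UC_h\ge0$, and $XE-UF\ge0$.

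Next I would invoke the standard delay-independent stability and performance results for positive delay systems (see, e.g.,\cite{Haddad:04,Briat:11h,Briat:book1}): the system is asymptotically stable for all $h\ge0$ iff $(A-LC)+(A_h-LC_h)$ is Hurwitz, and its $L_\infty$-to-$L_\infty$ gain coincides with that of the zero-delay system
\begin{equation*}
\dot{e}^\bullet(t)=\Psi_L e^\bullet(t)+(E-LF)\delta^\bullet(t),\quad \zeta^\bullet=M^\bullet e^\bullet,
\end{equation*}
where $\Psi_L:=(A+A_h)-L(C+C_h)$. Applying Theorem \ref{th:L1obs} to this reduced system, with $(A,C)$ replaced by $(A+A_h,C+C_h)$, yields the uniformity statement \eqref{eq:uni1delay} and shows that the optimal gain is $L^*=(X^*)^{-1}U^*$ where $(X^*,U^*)$ solves the LP with the inequality \eqref{eq:L1:cond2delay} in which $\Psi=X(A+A_h)-U(C+C_h)$ replaces $XA-UC$. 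The positivity constraint on the delayed matrix is added as the extra linear condition $XA_h-UC_h\ge0$.

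The main subtle point, rather than the computations themselves, is to verify that invoking the delay-independent equivalence is legitimate here: the $L_\infty$-gain characterization underlying Lemma \ref{lem:glab2} uses a constant vector $\lambda$ (not a Lyapunov--Krasovskii functional), and one must argue that, under the positivity and Metzler conditions secured by the LP constraints, the same copositive linear certificate $(\lambda,\gamma)$ simultaneously certifies positivity, delay-independent Hurwitz stability, and the $L_\infty$-to-$L_\infty$ bound for the delayed error system. Once this is established, the uniformity argument of Lemma \ref{lem:SISO} together with the decomposition of Lemma \ref{lem:glab2} carries through line by line with $\Psi_L$ in place of $A-LC$, closing the equivalence between (a) and (b).
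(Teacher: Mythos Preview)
Your proposal is correct and follows essentially the same approach as the paper: the paper's proof is a one-line reference stating that the result is a consequence of Theorem~\ref{th:L1obs} together with the $L_\infty$-gain results for linear positive time-delay systems (citing \cite{Shen:14,Shen:15}), and your argument simply spells out this reduction in detail --- deriving the delayed error dynamics, identifying the positivity constraints on $A-LC$, $A_h-LC_h$, and $E-LF$, and then invoking the delay-independent equivalence to apply Theorem~\ref{th:L1obs} to the lumped system with $\Psi_L=(A+A_h)-L(C+C_h)$.
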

\begin{proof}
  This result is a consequence of Theorem \ref{th:L1obs} and the results on the $L_\infty$-gain of linear positive systems with delays; see e.g. \cite{Shen:14,Shen:15}.
\end{proof}

Note that the above result can be straightforwardly adapted to the cases of multiple constant delays, time-varying delays and distributed delays \cite{Shen:14}. This is omitted for brevity.

\subsubsection{Discrete-time systems}

Similarly to as in the continuous-time case, we can define the  $\ell_\infty$-gain of an LTI discrete-time system $G$ as
\begin{equation}
  ||G||_{\ell_\infty-\ell_\infty}:=\sup_{||w||_{\ell_\infty}=1}||Gw||_{\ell_\infty}
\end{equation}
where $\textstyle||w||_{\ell_\infty}:=\sup_{k\in\mathbb{N}}||w(k)||_\infty$. We will also work here with the following discrete-time counterpart of \eqref{eq:systy}
\begin{equation}\label{eq:DTsyst}
  \begin{array}{rcl}
    x(k+1)&=&A_dx(k)+E_dw(k),\     x(0)=x_0\\
    y(k)&=&C_dx(k)+F_dw(k)
  \end{array}
\end{equation}
for which we will consider the following interval-observer:
\begin{equation}\label{eq:obsDT}
\begin{array}{rcl}
      x^\bullet(k+1)&=&A_dx^\bullet(k)+E_dw^\bullet(k)+Lm^\bullet(k)\\
m^\bullet(k)&=&y(k)-C_d x^\bullet(k)-F_d w^\bullet(k)\\
      x^\bullet(0)&=&x_0^\bullet
\end{array}
\end{equation}
where $\bullet\in\{-,+\}$. The observed outputs $\zeta^\bullet$ are defined as in the continuous-time case; i.e. $\zeta^\bullet=M^\bullet e^\bullet$. While continuous-time and discrete-time stability conditions for general linear systems are structurally quite different, the stability conditions are interdependent in the case of positive systems because of the fact that a nonnegative matrix $Z$ is Schur stable iff the Metzler matrix $Z-I$ is Hurwitz stable\footnote{This follows from the facts that $Z\ge0$ is Schur stable iff there exists a vector $\lambda>0$ such that $\lambda^T(Z-I)<0$. This is, in turn, equivalent to the Hurwitz stability of the Metzler matrix $Z-I$.}. From a direct inspection of the results of Theorem \ref{th:Linf} and its discrete-time analogue (see \cite{Naghnaeian:14}), we can immediately see that the $\ell_\infty$-gain of a positive LTI discrete-time system $(A_d,E_d,C_d,F_d)$  is equal to the $L_\infty$-gain of the positive LTI continuous-time system ${(A_d-I,E_d,C_d,F_d)}$, thereby allowing us to state the following result:
\begin{proposition}\label{cor:DT}
The following statements are equivalent:
\begin{enumerate}
  \item There exists an optimal $\ell_\infty$-to-$\ell_\infty$ interval-observer of the form \eqref{eq:obsDT} for the system \eqref{eq:DTsyst}. 
   \item We have that
  \begin{equation}\label{eq:uni1DT}
     L^*(\Sigma(\mathds{1}_{n}^T,\mathds{1}_{p}^T))\in\mathcal{L}^*(\Sigma(M,N))
      \end{equation}
  for all $(M,N)\in\mathbb{R}^{q\times n}_{\ge0}\times\mathbb{R}^{q\times p}_{\ge0}$. Moreover, the optimal gain is given by $L^*=(X^*)^{-1}U^*$ where the pair $(X^*,U^*)$ is the optimal solution of the linear program
%
  {\begin{equation}
        \inf_{X,U,\gamma} \gamma
      \end{equation}
     where $U\in\mathbb{R}^{n\times r}$, $X\in\mathbb{R}^{n\times n}$ diagonal, $X\mathds{1}_n>0$, ${XA_d-UC_d\ge0}$, ${XE_d-UF_d \ge0}$
and
  \begin{equation}\label{eq:L1:cond2DT}
  \begin{bmatrix}
    \mathds{1}_n\\
    1
  \end{bmatrix}^T\begin{bmatrix}
    X(A_d-I)-UC  &&     (XE-UF)\mathds{1}_p\\
    \mathds{1}_n^T && q-\gamma
  \end{bmatrix}<0.
\end{equation}}
%
\end{enumerate}
\end{proposition}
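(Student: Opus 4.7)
The plan is to reduce Proposition \ref{cor:DT} to Theorem \ref{th:L1obs} by using the correspondence between discrete-time Schur positivity and continuous-time Hurwitz Metzler structure recalled in the footnote immediately preceding the statement.

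First I would derive the discrete-time error dynamics. Setting $e^+(k):=x^+(k)-x(k)$ and $e^-(k):=x(k)-x^-(k)$ and repeating the derivation that led to \eqref{eq:error} gives
\begin{equation*}
\begin{aligned}
e^\bullet(k+1) &= (A_d-LC_d)e^\bullet(k)+(E_d-LF_d)\delta^\bullet(k),\\
\zeta^\bullet(k) &= M^\bullet e^\bullet(k),\quad \bullet\in\{-,+\},
\end{aligned}
\end{equation*}
with $\delta^\bullet$ nonnegative by construction. Positivity of this discrete-time system is equivalent to $A_d-LC_d\ge0$ and $E_d-LF_d\ge0$, entrywise. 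Under the change of variable $U=XL$ with $X$ diagonal and positive on the diagonal, these are exactly the first two matrix constraints $XA_d-UC_d\ge0$ and $XE_d-UF_d\ge0$ of the linear program in (b).

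Second, I would import the equivalence highlighted in the footnote: for the nonnegative matrix $A_d-LC_d$, Schur stability is equivalent to Hurwitz stability of the Metzler matrix $(A_d-LC_d)-I$, and by the discrete-time analogue of Theorem \ref{th:Linf} given in \cite{Naghnaeian:14}, the $\ell_\infty$-gain of the positive discrete-time system $(A_d-LC_d,E_d-LF_d,M^\bullet,N^\bullet)$ coincides with the $L_\infty$-gain of the continuous-time positive system $(A_d-I-LC_d,E_d-LF_d,M^\bullet,N^\bullet)$. Consequently, the discrete-time observer synthesis problem is equivalent, observer gain by observer gain, to the continuous-time synthesis problem addressed by Theorem \ref{th:L1obs} applied to the auxiliary pair $(A_d-I,C_d)$ with unchanged $(E_d,F_d,M,N)$.

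Third, I would transfer the conclusions of Theorem \ref{th:L1obs} through this bijection. The uniformity statement \eqref{eq:uni1DT} is immediate because Lemmas \ref{lem:SISO}--\ref{lem:glab2} depend only on the Metzler/nonnegative structure of the positive LTI system involved, not on the specific entries of $A$. The scalar inequality \eqref{eq:L1:cond2DT} is simply the continuous-time inequality \eqref{eq:L1:cond2} with $A$ substituted by $A_d-I$ and $(C,E,F)$ by $(C_d,E_d,F_d)$. The main point of care, and what I expect to be the only technical obstacle, is that Theorem \ref{th:L1obs} includes a Metzler-type slack constraint $XA-UC+\alpha I_n\ge0$, whereas here discrete-time positivity forces the stronger entrywise condition $XA_d-UC_d\ge0$ (Metzler-ness of $A_d-I-LC_d$ would only guarantee nonnegative off-diagonal entries of $A_d-LC_d$, not of the diagonal). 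One therefore has to justify that replacing the slack Metzler constraint by an entrywise nonnegativity constraint is not a free substitution coming from Theorem \ref{th:L1obs} but is dictated by the discrete-time positivity requirement, which absorbs the $\alpha$ scalar out of the program. Once this substitution is accounted for, the rest reduces to a direct dictionary translation, concluding the proof.
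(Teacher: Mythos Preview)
Your proposal is correct and follows essentially the same route as the paper, which does not give a formal proof but relies on the discussion immediately preceding the statement: reduce the discrete-time problem to the continuous-time one via the correspondence $Z\mapsto Z-I$ (Schur $\Leftrightarrow$ Hurwitz for nonnegative $Z$), invoke the equality of the $\ell_\infty$- and $L_\infty$-gains from \cite{Naghnaeian:14}, and then apply Theorem~\ref{th:L1obs} to the auxiliary data $(A_d-I,C_d,E_d,F_d)$. Your additional observation that the slack Metzler constraint $XA-UC+\alpha I_n\ge0$ must be replaced by the genuine entrywise constraint $XA_d-UC_d\ge0$ because discrete-time positivity requires nonnegativity of the diagonal as well is a point the paper leaves implicit, and it is the right justification for why $\alpha$ disappears from the program.
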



By combining the results in Proposition \ref{cor:delay} and Proposition \ref{cor:DT}, we can readily obtain a condition for the design of an optimal interval observer for discrete-time systems with delays.

\section{Examples}\label{sec:ex}

\subsection{Stable linear systems}\label{sec:ex1}

\noindent\textbf{Case 1.} Let us consider the system \eqref{eq:syst} with matrices $C = \begin{bmatrix}
  0 & 1
\end{bmatrix}$, $F = 1$,
\begin{equation}\label{eq:decoupled}
A = \begin{bmatrix}
  -2 &1\\
   3 &-5
\end{bmatrix}\ \textnormal{and}\ E =\begin{bmatrix}
  1\\2
\end{bmatrix}.
\end{equation}
Using Theorem \ref{th:L1obs}, we find the optimal observer gain $L^*=[1\ \ 2]^T$. 
Note, however, that the computed minimum value for $\gamma^*$ is the one corresponding to the matrices $M^+=M^-=\mathds{1}^T_n$. Choosing then $M^+=M^-=I_n$ and computing the actual $L_\infty$-gain (using Theorem  \ref{th:Linf}) with the previously computed observer matrix gain $L^*$ gives an $L_\infty$-gain that is nearly 0, which is consistent with the fact that $E-L^*F=0$ (perfect decoupling). Using the initial conditions $x_0=[-1\ \ 2]^T$, $x_0^-=[-5\ \ -5]^T$, $x_0^+=[5\ \ 5]^T$, the persistent input $w(t)=\sin(t)$ and the bounds $w^-(t)=-1$ and $w^+(t)=1$, yield the trajectories depicted in Fig.~\ref{fig:stabdec} where we can see that the trajectories of the observers converge from above and below to the actual state trajectory of the system.

\noindent\textbf{Case 2.} Changing now the matrix $A$ to
\begin{equation}\label{eq:ndecoupled}
  A=\begin{bmatrix}
  -2 &-1\\ 3& -5
\end{bmatrix}
\end{equation}
and solving for the observer gain yields the value $L^*=[-1\ \ 2]^T$. 
Solving for the minimal $\gamma$ with $M^-=M^+=I$ in Theorem \ref{th:Linf} yields the value $\gamma^*=1.4304$, showing that decoupling is no longer possible if positivity is to be preserved. Using the same scenario as for the previous system yields the trajectories depicted in Fig.~\ref{fig:stabndec} where we can see that, in spite of the observation error, the optimal design of the observers allow for an accurate estimation of the interval.
%
\begin{figure}[H]
  \centering
\includegraphics[width=0.5\textwidth]{./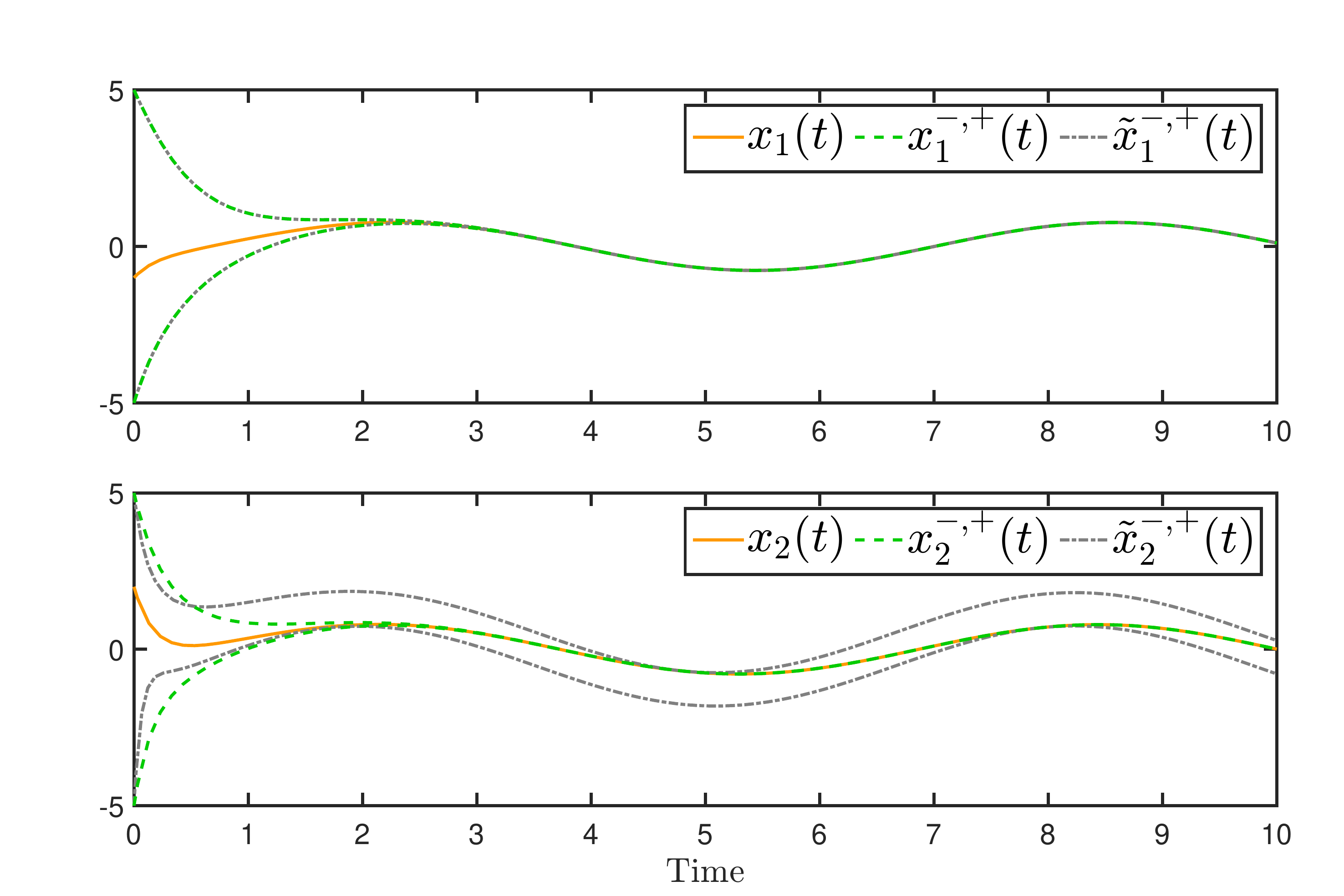}
  \caption{Trajectories of the system \eqref{eq:decoupled} and its optimal $L_\infty$-to-$L_\infty$ interval observers \eqref{eq:obs} and \eqref{eq:relaxobs}.}\label{fig:stabdec}
\end{figure}


\begin{figure}
  \centering
  \includegraphics[width=0.5\textwidth]{./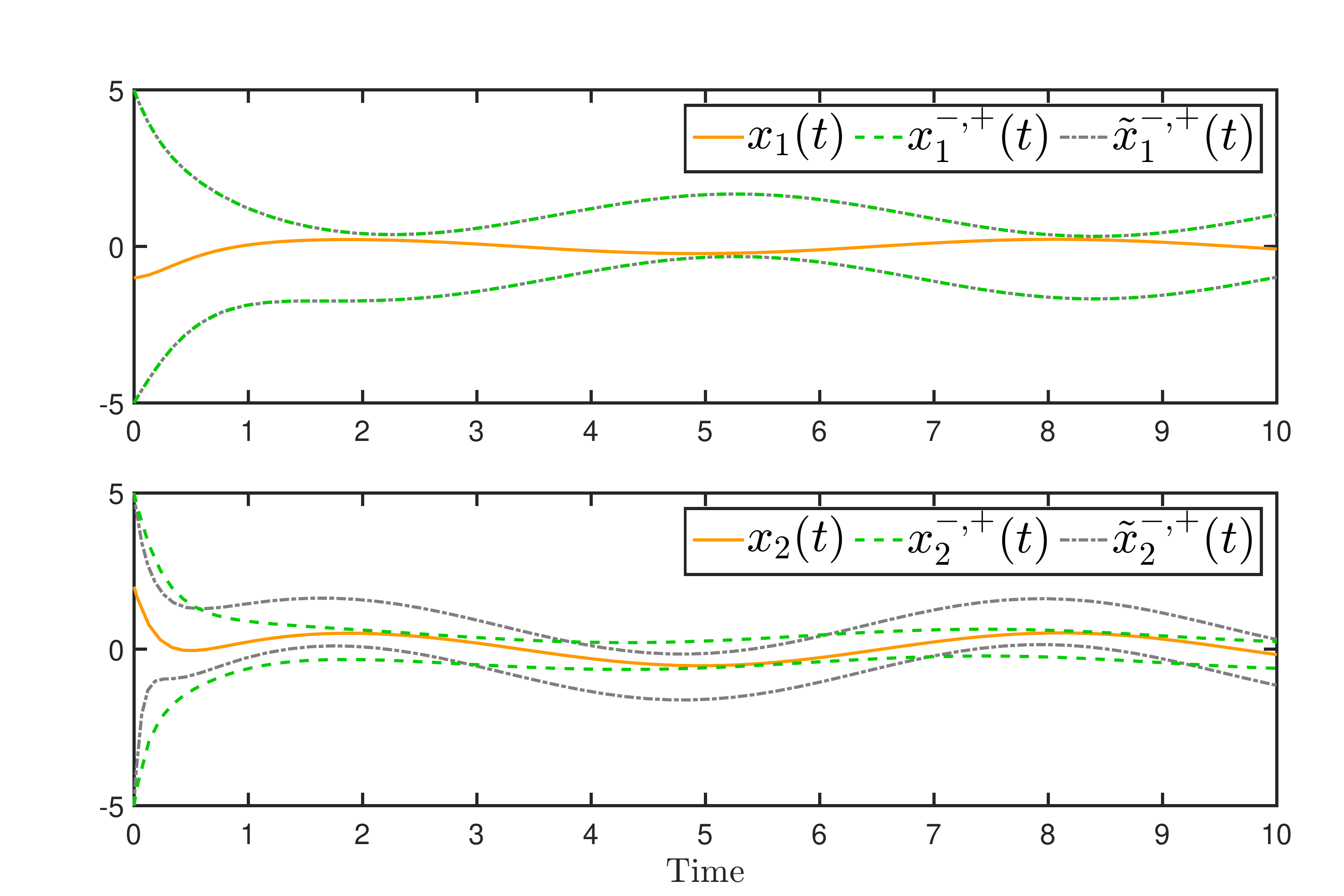}
  \caption{Trajectories of the system \eqref{eq:ndecoupled} and its optimal $L_\infty$-to-$L_\infty$ interval observer \eqref{eq:obs} and \eqref{eq:relaxobs}.}\label{fig:stabndec}
\end{figure}

\noindent\textbf{Case 3.} Finally, if we consider the matrix $A$ in \eqref{eq:ndecoupled} and the matrix $E=[1\ \ -6]^T$, then the optimization problem in Theorem \ref{th:L1obs} fails, meaning that an interval observer of the form  \eqref{eq:obs} does not exist for this system. This comes from the fact that $E-LF\ge0$, $L=[\ell_1\ \ell_2]^T$, if and only if $\ell_2\le -6$ and that a necessary condition for the matrix $A-LC$ be Hurwitz stable is $\ell_2>-5$.

\noindent\textbf{Comparison with the observer \eqref{eq:relaxobs}.} We now compare the results obtained with the observer \eqref{eq:obs} to the results obtained with the relaxed observer \eqref{eq:relaxobs}. Using Theorem \ref{th:L1obs} and Remark \ref{rem:relax}, we get that $L^*=[1\ 10]^T$ in \textbf{Case 1} and $L^*=[-1\ 10]^T$ in \textbf{Case 2} and \textbf{Case 3}. It is readily found that $A-L^*C$ is the same for all three cases and, therefore, that $\gamma^*=0.501$. We can see in Fig.\ref{fig:stabdec} that the observer \eqref{eq:obs} performs much better than the relaxed observer \eqref{eq:relaxobs} as it allows for decoupling through the consideration of the input matrix $E-LF$. In Fig.~\ref{fig:stabndec}, we can also see that the interval observer  \eqref{eq:obs} gives better results than the relaxed observer \eqref{eq:relaxobs} since the maximal error is lower, even though the theoretical attenuation level $\gamma$ is higher. A main issue with the relaxed observer is that the disturbance vector is artificially augmented from dimension $p$ to $n\ge p$. In this regard, the computed optimal gain of the relaxed observer \eqref{eq:relaxobs} is clearly suboptimal for the original problem. The advantage of the relaxed observer is shown in Fig.~\ref{fig:stabndec_nE} where interval observation is performed on the system treated in \textbf{Case 3} for which no interval observer of the form \eqref{eq:obs} exists.

%

\begin{figure}
  \centering
  \includegraphics[width=0.5\textwidth]{./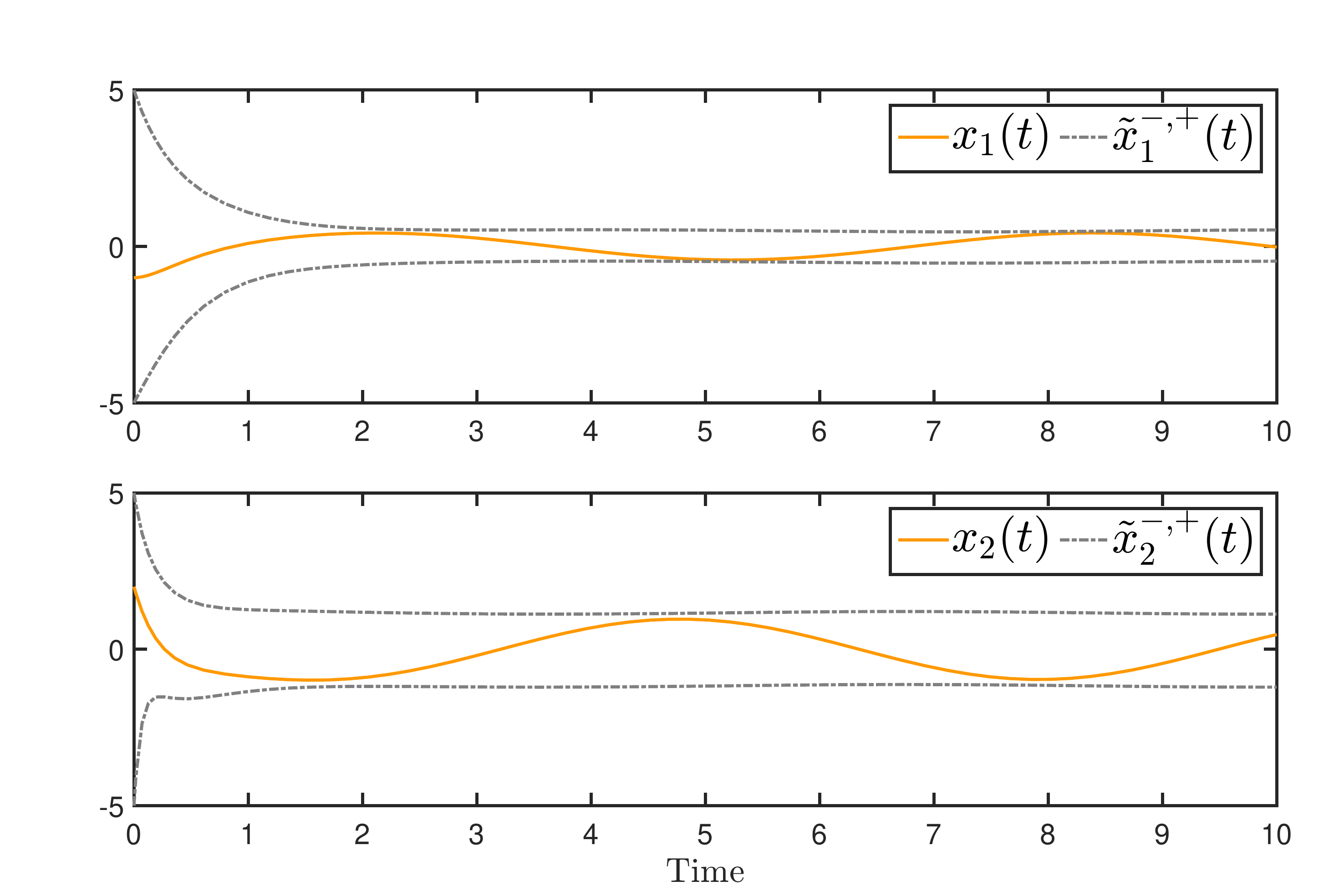}
  \caption{Trajectories of the system \eqref{eq:ndecoupled} with $E=[0\ \ -6]^T$ and its optimal $L_\infty$-to-$L_\infty$ interval observer \eqref{eq:relaxobs}.}\label{fig:stabndec_nE}
\end{figure}

%
%

\subsection{A nonlinear system}\label{sec:ex2}

Let us consider now the following 3-stage population model \cite{Gouze:00} given by
\begin{equation}\label{eq:population}
  \begin{array}{rcl}
    \dot{x}_1(t)&=&-\beta_1x_1(t)+\dfrac{a(t)x_3(t)}{b+x_3(t)}\\
    \dot{x}_2(t)&=&\alpha_1x_1(t)-\beta_2x_2(t)\\
    \dot{x}_3(t)&=&\alpha_2x_2(t)-\beta_3x_3(t)
  \end{array}
\end{equation}
where $\alpha_1,\alpha_2,\beta_1,\beta_2,\beta_3$ and $b$ are positive parameters. The function $a(t)$ is assumed to be continuous and to satisfy $a(t)\in[a^-,a^+]$, for some $0<a^-<a^+$. These system may have either one or two equilibrium points depending on the values of the parameters. The 0-equilibrium point always exists whereas the positive one exists only for some values of the parameters.

Let us assume that we measure $x_3(t)$, i.e. $y(t)=x_3(t)$. We then consider the interval observer \eqref{eq:obs} with
\begin{equation}\label{eq:Aex}
  \hspace{-2pt}A=\begin{bmatrix}
    -\beta_1 &  0 & 0\\
    \alpha_1 & -\beta_2&0\\
    0 & \alpha_2 & -\beta_3
  \end{bmatrix},\ E=\begin{bmatrix}
    1\\
    0\\
    0
  \end{bmatrix}\ \textnormal{and}\ C=\begin{bmatrix}
    0\\
    0\\
    1
  \end{bmatrix}^T.
\end{equation}
Note that $A$ is Metzler and Hurwitz stable. Define, moreover, the signals $w(t)=a(t)h(x_3(t))$, $w^-(t)=a^-h(x_3(t))$, $w^+(t)=a^+h(x_3(t))$ and $h(x_3)=x_3/(x_3+b)$. Note that these signals are not persistent when the 0-equilibrium is asymptotically stable. We then have the following result:
\begin{proposition}
Let $M^+=M^-=I_n$, then the minimal $L_\infty$-gain achieved by an interval observer of the form \eqref{eq:obs} for the system \eqref{eq:syst2}-\eqref{eq:Aex} is given by
\begin{equation}
  \max\left\{\dfrac{1}{\beta_1},\dfrac{\alpha_1}{\beta_1\beta_2}\right\}
\end{equation}
and is achieved with any optimal gain $L$ of the form
  $L=\begin{bmatrix}
    0 & 0 & \ell_3
  \end{bmatrix}^T$ where $\ell_3$ is such that
  \begin{equation}
    \ell_3>\alpha_2\max\left\{1,\ \dfrac{\alpha_1}{\beta_2}\right\}-\beta_3.
  \end{equation}
\end{proposition}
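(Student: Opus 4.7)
The plan is to compute the minimal gain explicitly by an elementary analysis based on Theorem \ref{th:Linf}(b), without invoking the full linear program of Theorem \ref{th:L1obs}. First I would write
\begin{equation*}
A - LC = \begin{bmatrix} -\beta_1 & 0 & -\ell_1\\ \alpha_1 & -\beta_2 & -\ell_2\\ 0 & \alpha_2 & -(\beta_3+\ell_3)\end{bmatrix}
\end{equation*}
with $L=[\ell_1\ \ell_2\ \ell_3]^T$ and observe that the Metzler requirement reduces to $\ell_1\le0$ and $\ell_2\le0$, while the constraint $E-LF\ge0$ holds trivially since $F=0$ and $E\ge0$.

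Since $M^+=M^-=I_n$ and there is no feedthrough, Theorem~\ref{th:Linf}(b) identifies the $L_\infty$-gain of $\delta^\bullet\mapsto\zeta^\bullet$ with $\Vert v\Vert_\infty$, where $v:=-(A-LC)^{-1}E$ (using $p=1$ so that $\mathds{1}_p=1$). I would obtain $v$ by solving $(A-LC)v=-E$ directly: the third row gives $v_2=(\beta_3+\ell_3)v_3/\alpha_2$, the second row gives $v_1$ in terms of $v_3$, and the first row then determines $v_3$.

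The heart of the argument is a pair of universal lower bounds on $v_1$ and $v_2$. The first equation reads $\beta_1 v_1 = 1 - \ell_1 v_3$; since $\ell_1\le0$ and $v\ge0$ (by the Metzler--Hurwitz property of $A-LC$ and nonnegativity of $E$), this forces $v_1\ge1/\beta_1$. Similarly, $\beta_2 v_2 = \alpha_1 v_1 - \ell_2 v_3 \ge \alpha_1/\beta_1$, giving $v_2\ge\alpha_1/(\beta_1\beta_2)$. Hence, for every admissible $L$,
\begin{equation*}
\Vert v\Vert_\infty \ge \max\left\{\frac{1}{\beta_1},\frac{\alpha_1}{\beta_1\beta_2}\right\},
\end{equation*}
which establishes the claimed lower bound independently of $L$.

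To see that this bound is attained, I would specialise to $L=[0\ 0\ \ell_3]^T$, so that $A-LC$ becomes lower triangular with diagonal $-\beta_1,-\beta_2,-(\beta_3+\ell_3)$; Hurwitz stability then merely requires $\ell_3>-\beta_3$. Backward substitution yields $v_1=1/\beta_1$, $v_2=\alpha_1/(\beta_1\beta_2)$ and $v_3=\alpha_1\alpha_2/[\beta_1\beta_2(\beta_3+\ell_3)]$, so it remains to pick $\ell_3$ large enough that $v_3\le\max\{v_1,v_2\}$. A short case analysis on whether $\alpha_1/\beta_2$ is $\ge1$ or $<1$ shows that the condition $\beta_3+\ell_3>\alpha_2\max\{1,\alpha_1/\beta_2\}$ stated in the proposition suffices (it is slightly stronger than the tight threshold, namely $\beta_3+\ell_3\ge\alpha_2\min\{1,\alpha_1/\beta_2\}$, but remains sufficient for optimality). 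The main obstacle is articulating cleanly the nonnegativity of $v$ used to derive the lower bounds on $v_1,v_2$; once this is in place, the rest is a direct substitution.
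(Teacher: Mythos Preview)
Your argument is correct and takes a genuinely different route from the paper. The paper first argues that $\ell_1=\ell_2=0$ must hold at the optimum by writing $LC=\ell_3vv^T+uv^T$ and applying the Sherman--Morrison formula to $-(A-LC)^{-1}E$, showing that any admissible (i.e.\ nonpositive) $\ell_1,\ell_2$ can only increase the entries of this vector; only afterwards does it compute $-(A-LC)^{-1}E$ explicitly for $\ell_1=\ell_2=0$ and tune $\ell_3$. Your approach bypasses the Sherman--Morrison perturbation step entirely: from the rows of $(A-LC)v=-E$ together with $\ell_1,\ell_2\le0$ and $v\ge0$ you read off the universal lower bounds $v_1\ge1/\beta_1$ and $v_2\ge\alpha_1/(\beta_1\beta_2)$ valid for \emph{every} admissible $L$, which immediately gives the optimal value without first locating the optimiser. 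This is more elementary and, in fact, more complete than the paper's argument, whose Sherman--Morrison step is only justified ``for some sufficiently small $|\ell_1|$ and $|\ell_2|$'' (to keep the denominator $1-v^T\tilde A^{-1}u$ positive) and so does not directly cover the whole admissible set. Your parenthetical observation that the stated threshold on $\ell_3$ uses $\max$ where the tight condition would use $\min$ is also correct; the paper's bound is merely sufficient. The only point you flag as an obstacle---the nonnegativity of $v$---is standard: for $A-LC$ Metzler and Hurwitz one has $-(A-LC)^{-1}\ge0$, hence $v=-(A-LC)^{-1}E\ge0$ since $E\ge0$.
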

\vspace{-4mm}
\begin{proof}
We show first that $\ell_1=\ell_2=0$. To this aim, let us note that when $\ell_1=\ell_2=0$ and $\ell_3>0$, then the matrix $A-LC$ is Hurwitz stable. Moreover, for the matrix $A-LC$ to be Metzler, we need that both $\ell_1$  and $\ell_2$ be nonpositive. We also note that when $\ell_3$ is given, then the matrix $A-LC$ remains stable for some sufficiently small $|\ell_1|$ and $|\ell_2|$. We show now that for any $\ell_3$ such that the matrix $A-LC$ is Hurwitz stable, negative values for $\ell_1$ and $\ell_2$ will increase the norm of the system. Let us define $LC=:\ell_3vv^T+uv^T$ where $u=[\ell_1\ \ell_2\ 0]^T$ and $v=[0\ 0\ 1]^T$. Then, we have that $A-LC=A-\ell_3vv^T-uv^T$ and using the Sherman-Morrison formula, we get that
\vspace{-3mm}
\begin{equation*}
   -(A-LC)^{-1}E=-\left(\tilde{A}^{-1}+\dfrac{\tilde{A}^{-1}uv^T\tilde{A}^{-1}}{1-v^T\tilde{A}^{-1}u}\right)E\ge -\tilde{A}^{-1}E
   \vspace{-3mm}
\end{equation*}
where  $\tilde A=A-\ell_3vv^T$. This inequality follows from the facts that the numerator of the fraction above is nonpositive (recall that $\tilde{A}^{-1}\le$ and that $\ell_1,\ell_2\le0$), that its denominator is positive for some sufficiently small $|\ell_1|$ and $|\ell_2|$ and that $E$ is nonnegative. This implies that when $|\ell_1|$ and $|\ell_2|$ increase, then the gain increases as well. Therefore we need to set $\ell_1=\ell_2=0$, which leads to the expression:
%
\vspace{-2mm}
\begin{equation}\label{eq:kdslmldksmkd}
  \max\{-(A-LC )^{-1}E\}=\dfrac{1}{\beta_1}\max\left\{1,\dfrac{\alpha_1}{\beta_2},\dfrac{\alpha_1\alpha_2}{\beta_2(\beta_3 + \ell_3)}\right\}.
\end{equation}
We can see that by choosing a sufficiently large $\ell_3$, then we can act on the last entry and make it smaller than the max of the others. Simple calculations then yield the result.
\end{proof}

This example is interesting numerically speaking as it illustrates some difficulties arising when no minimum exists and we have an infimum instead, resulting then in very large gain values. Indeed, remember that the gain of the observer is computed for $M=\mathds{1}^T_n$ and, therefore, the $L_\infty$-gain for the current system consists of the sum of the possible bounds in \eqref{eq:kdslmldksmkd}, which has no minimum because of the presence of $\ell_3$ at the denominator of one of these values. In order to overcome this situation, we can consider bounds on the observer gain as stated in Remark \ref{rem:bounds}. So, the idea is to compute an observer gain that will minimize the $L_\infty$-gain of the transfer of the disturbance to the observed outputs while satisfying the constraint $-5\mathds{1}_3\le L\le 5\mathds{1}_3$. Using the parameters $\beta_1=2$, $\beta_2=2$, $\beta_3=3$, $\alpha_1=3$ and $\alpha_2=4$, we find that the optimal gain is given by $L^*=[0\ 0\ 5]^T$ and that $\gamma^*=\alpha_1/(\beta_1\beta_2)=3/4$. A simulation is then performed with the parameters $b=1$, $a(t)=(a^-+a^+)/2+(a^+-a^-)\sin(0.1t)/2$, $a^-=1$, $a^+=2$, and the initial conditions $x_0=[0.1\ \ 0\ \ 0]^T$, $x_0^-=[0.01\ \ 0\ \ 0]^T$, $x_0^+=[0.6\ \ 0.8\ \ 1.1]^T$. The results are depicted in Fig.~\ref{fig:population} where we can see that the interval observer is able to provide a fairly good estimation for the trajectories of the system. 

\begin{figure}[h!]
  \centering
  \hspace{-13pt}\includegraphics[width=0.5\textwidth]{./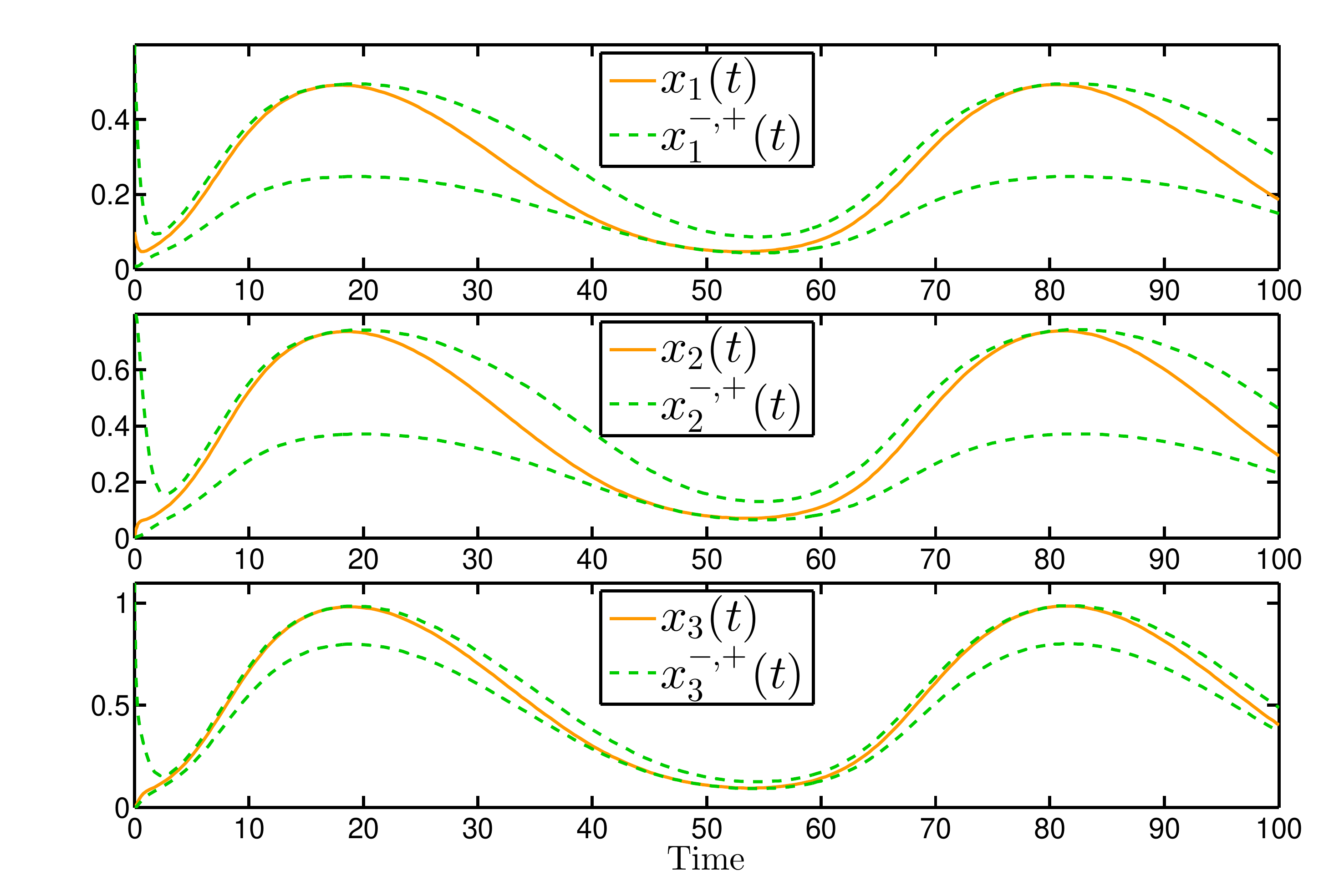}
  \caption{Trajectories of the population model \eqref{eq:population} and its optimal $L_\infty$-to-$L_\infty$ interval observer.}\label{fig:population}
\end{figure}

\section{Conclusion}

Linear programming conditions have been obtained for the design of a class of optimal $L_\infty$-to-$L_\infty$  interval observers for continuous-time systems, discrete-time systems and systems with delays. It is shown that the optimal observer is uniform over the set of matrices mapping the estimation errors to the observed outputs. The approach is versatile and should be applicable to more complex systems such as uncertain systems \cite{Colombino:15}. However, the consideration of more general observers, such as those considered in \cite{Blanchini:12}, that can be applied to a wider class of systems seems to be more difficult. It is also unclear, for the moment, how to non-conservatively extend the current approach to the design observer-based controllers. A final interesting question is whether the consideration of higher-order observers would improve the performance of the observers. These questions are left for future research.

\bibliographystyle{IEEEtran}

\end{document}